\documentclass[12pt, reqno]{amsart}

\usepackage{amssymb, amsmath, amsthm}
\usepackage[backref]{hyperref}
\usepackage[alphabetic,backrefs,lite]{amsrefs}
\usepackage{amscd}   
\usepackage{fullpage}
\usepackage[all]{xy} 

\DeclareFontEncoding{OT2}{}{} 


\usepackage{color}


\newtheorem{lemma}{Lemma}[section]
\newtheorem{theorem}[lemma]{Theorem}

\newtheorem{prop}[lemma]{Proposition}
\newtheorem{cor}[lemma]{Corollary}

\newtheorem{claim*}{Claim}
\newtheorem{thm}[lemma]{Theorem}

\newtheorem{problem}[lemma]{Problem}

\theoremstyle{definition}
\newtheorem{remark}[lemma]{Remark}

\newcommand{\Aff}{{\mathbb A}}
\newcommand{\G}{{\mathbb G}}

\newcommand{\PP}{{\mathbb P}}

\newcommand{\F}{{\mathbb F}}
\newcommand{\Q}{{\mathbb Q}}

\newcommand{\Z}{{\mathbb Z}}

\newcommand{\ksep}{{k^{\operatorname{s}}}}

\newcommand{\Xsep}{{X^{\operatorname{s}}}}

\newcommand{\Adeles}{{\mathbb A}}
\newcommand{\kk}{{\mathbf k}}

\newcommand{\pp}{{\mathfrak p}}

\newcommand{\Ptilde}{\widetilde{P}}

\newcommand{\calA}{{\mathcal A}}
\newcommand{\calB}{{\mathcal B}}

\newcommand{\calE}{{\mathcal E}}

\newcommand{\calO}{{\mathcal O}}

\newcommand{\OO}{{\mathcal O}}

\DeclareMathOperator{\HH}{H}

\DeclareMathOperator{\Char}{char}

\DeclareMathOperator{\Gal}{Gal}
\DeclareMathOperator{\Ind}{Ind}

\DeclareMathOperator{\Br}{Br}

\DeclareMathOperator{\Sym}{Sym}

\DeclareMathOperator{\Pic}{Pic}

\DeclareMathOperator{\Spec}{Spec}

\DeclareMathOperator{\et}{et}

\DeclareMathOperator{\N}{N}

\newcommand{\Am}[1]{k^{\times} \setminus k^{\times #1}} 
\newcommand{\hideqed}{\renewcommand{\qed}{}}


\newcommand{\isom}{\cong}

\numberwithin{equation}{section}
\numberwithin{table}{section}

\newcommand{\defi}[1]{\textsf{#1}} 

\title[Analogues of Chatelet surfaces]{
		Higher dimensional analogues of Ch\^atelet surfaces}
\subjclass[2000]{Primary 11 G35; Secondary 14 G05}

\author{Anthony V\'arilly-Alvarado}
\author{Bianca Viray}
\thanks{The second author was partially supported by NSF Grant DMS-0841321 and a Ford Foundation Dissertation Fellowship.  This collaboration was partially supported by Rice University.}

\address{Department of Mathematics MS 136, Rice University, Houston, TX 77005, USA}
\email{varilly@rice.edu}
\urladdr{http://www.math.rice.edu/\~{}av15}

\address{Department of Mathematics, Box 1917, Brown University, Providence, RI
			02912, USA}
\email{bviray@math.brown.edu}
\urladdr{http://math.brown.edu/\~{}bviray}

\date{}


\begin{document}

	\begin{abstract}
		We discuss the geometry and arithmetic of higher-dimensional analogues of Ch\^atelet surfaces; namely, we describe the structure of their Brauer and Picard groups and show that they can violate the Hasse principle.  In addition, we use these varieties to give straightforward generalizations of two recent results of Poonen.  Specifically, we prove that, assuming Schinzel's hypothesis,  the non-$m^{th}$ powers of a number field are diophantine.  Also, given a global field $k$ such that $\Char(k) = p$ or $k$ contains the $p^{th}$ roots of unity, we  construct a $(p+1)$-fold that has no $k$-points and no \'etale-Brauer obstruction to the Hasse principle.  
	\end{abstract}

	\maketitle

	%
	\section{Introduction}

		Our goal in this note is to draw attention to a particular class of smooth compactifications of varieties of the form
			\[
				\N_{K/k}(\vec{z}) = P(x),
			\]
		where $K/k$ is a finite extension of fields with associated norm form $\N_{K/k}$, and $P(x)$ is a polynomial in one variable of degree at least two.  The arithmetic of these projective varieties is studied, in essentially this level of generality, in \cites{Serre,CTS-Pencils,CTSSD-Schinzel,CT-Pest,CTHS}.  Theorems that apply to this broad class of varieties are relatively difficult to prove. However, one gets significant and strong results by imposing further hypotheses on $K/k$ and $P(x)$. On the one hand, one may require $K/k$ to be a quadratic extension of (say) number fields and $P(x)$ to be a separable polynomial (possibly of high degree), in which case we recover the study of conic bundle surfaces.  If further $P(x)$ is of degree $4$, then we recover Ch\^atelet surfaces, a class of varieties whose arithmetic is well understood \cites{Chatelet,CTS-descent,CTCS,CTSSDChatelet1,CTSSDChatelet2}.  On the other hand, one can let $K/k$ be more or less arbitrary and then restrict $P(x)$ to have a small number of roots \cites{CTS-Requivalence,HBS,CTHS,CT-Pest}.

		We specialize in a different direction that is ``orthogonal'' to the above cases. More precisely, we take $k$ a global field, $K/k$ a cyclic extension of prime degree $p$, and $P(x)$ a separable polynomial of degree $2p$.  These pencils of Severi-Brauer varieties coincide with the class of Ch\^atelet surfaces when $p = 2$;  for this reason we refer to these varieties as \defi{Ch\^atelet $p$-folds}.  

		We contend that Ch\^atelet $p$-folds are the proper high-dimensional generalizations, from both a geometric and an arithmetic perspective, of Ch\^atelet surfaces, and are therefore interesting objects to study.  In \S\ref{sec:ProperModels}, we give explicit smooth compactifications of their affine models.  In \S\ref{sec:PicardBrauer}, we give a complete description of their Picard and Brauer groups.  In \S\ref{sec:HassePrinciple}, we construct a Ch\^atelet $p$-fold over any global field $k$ with $\mu_p \subseteq k$ or $\Char(k) = p$ that violates the Hasse principle.  We list some further natural arithmetic questions in \S\ref{sec:Questions}.

		We remark that many ideas and computations involved in our proofs are standard; we record them for the reader's convenience.

		\subsection{Applications of Ch\^atelet $p$-folds}

			Our work suggests that, broadly speaking, any result whose proof uses Ch\^atelet surfaces can be generalized or re-proved using Ch\^atelet $p$-folds, provided one is willing to assume Schinzel's hypothesis~\cite{Schinzel}; see~\S\ref{subsec:diophantine_sets} for a statement of this conjecture over number fields. To illustrate this philosophy, we prove the following theorem on diophantine sets (recall that a subset $A \subseteq k$ is diophantine over $k$ if there is a closed subscheme $X \subseteq \Aff^{m+1}_k$ such that $A$ equals the projection of $X(k)$ under any map $k^{m+1} \to k$.)

			\begin{thm}
				\label{thm: pth powers}
				Assume Schinzel's hypothesis.  For any number field $k$ and any prime $p$, the set $k^{\times} \setminus k^{\times p}$ is diophantine over $k$.
			\end{thm}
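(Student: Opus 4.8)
The plan is to mimic Poonen's proof that $k^\times \setminus k^{\times 2}$ is diophantine, replacing Châtelet surfaces with Châtelet $p$-folds. Fix a number field $k$ and a prime $p$. The key observation is that for a fixed cyclic degree-$p$ extension $K/k$, an element $a \in k^\times$ lies in $\N_{K/k}(K^\times)$ if and only if a certain Severi-Brauer variety (or the associated class in $\Br(k)[p]$) splits; and more usefully, one can engineer a family of Châtelet $p$-folds, indexed by a parameter $a$, whose $k$-rational points detect whether $a$ is a $p$-th power. Concretely, I would choose a polynomial $P(x)$ of degree $2p$ — for instance something like $P(x) = $ a product of two degree-$p$ factors chosen so that the affine variety $\N_{K/k}(\vec z) = P(x)$ has a smooth compactification $V$ (furnished by \S\ref{sec:ProperModels}) whose local solubility is automatic, so that by the Hasse-principle analysis of \S\ref{sec:HassePrinciple} (which relies on Schinzel's hypothesis to produce a place where $P$ takes a value that is a norm), the existence of a $k$-point is equivalent to an explicit condition on the coefficients — ultimately of the form ``$a \notin k^{\times p}$'' for a suitable auxiliary parameter $a$.

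The steps, in order: (1) Write down the affine equation $\N_{K/k}(\vec z) = P_a(x)$ where $P_a$ is a degree-$2p$ separable polynomial depending on $a$ (and on fixed auxiliary constants), designed so that the corresponding Châtelet $p$-fold $V_a$ is everywhere locally soluble for every $a$ in some Zariski-open (or all $a$). (2) Using Schinzel's hypothesis exactly as in \S\ref{sec:HassePrinciple} — applied to the linear/affine factors of $P_a$ to force a prime $\qq$ at which $P_a(x_0)$ is a uniformizer and hence a norm from the completion $K_\qq$ — show that $V_a(k) \neq \emptyset$ if and only if the obstruction coming from the Brauer class $(\chi_{K/k}, a)$ vanishes, i.e.\ iff $a \in k^{\times p}$ (here $\chi_{K/k}$ is the character cutting out $K/k$, and $(\chi_{K/k}, a)$ is a cyclic algebra of degree $p$). (3) Conclude that the set of $a$ with $V_a(k) = \emptyset$ is exactly $k^\times \setminus k^{\times p}$. (4) Finally, bundle the equations of $V_a$ together: the total space $\calV \subseteq \Aff^{m+1}_k$ with coordinates $(a, \vec z, x, \dots)$ is a closed subscheme, and the projection of $\calV(k)$ to the $a$-coordinate is — after intersecting with the complement construction, or rather by taking $\calV$ to parametrize points witnessing $a \notin k^{\times p}$ directly — equal to $k^\times \setminus k^{\times p}$. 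One should be slightly careful: diophantine sets as defined are projections of $k$-points, so I want the variety whose points project onto the non-$p$-th-powers, which means the family should have a $k$-point precisely when $a \notin k^{\times p}$; this is arranged by choosing the Brauer obstruction to vanish exactly on the non-$p$-th-powers (swap the roles via $a \mapsto$ something, or build $P_a$ so that solubility $\iff (\chi, a) \neq 0$).

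The main obstacle I anticipate is step (2): verifying that Schinzel's hypothesis applies to the specific polynomials arising from $P_a$ — one needs the relevant factors to be pairwise coprime, to have no fixed prime divisor (a congruence condition on the auxiliary constants, satisfiable by a suitable choice), and one needs the local conditions at the finitely many bad places (including archimedean places and places dividing the discriminant and $p$) to be compatible with the value $P_a(x_0)$ being a norm from $K$. Packaging this uniformly in the parameter $a$ — so that the \emph{same} construction works for every $a \in k^\times$ — requires care, but it is exactly parallel to Poonen's argument and to the Hasse-principle violation constructed in \S\ref{sec:HassePrinciple}; the degree-$2p$, cyclic-degree-$p$ setup was chosen precisely so that this goes through. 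A secondary point to check is that the smooth compactification $V_a$ from \S\ref{sec:ProperModels} genuinely has points over every completion $k_v$ for the chosen $P_a$ — this is a finite check at the bad places plus a Lang–Weil/Hensel argument at the good ones, again uniform in $a$.
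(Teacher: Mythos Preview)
Your outline has the right spirit, but the core mechanism in step~(2) is wrong and two essential ingredients are missing. You claim that solubility of $V_a$ is governed by the vanishing of $(\chi_{K/k}, a) \in \Br k$, and that this vanishing is equivalent to $a \in k^{\times p}$. Neither holds: $(\chi_{K/k}, a) = 0$ means $a$ is a \emph{norm} from $K$, not a $p$-th power; and the Brauer--Manin obstruction on a Ch\^atelet $p$-fold is computed with classes in $\Br X/\Br k$ of the form $(\chi_K, Q(x))$ with $Q \mid P$ (Theorem~\ref{thm:Brauergroup}), not with a constant class in $\Br k$ depending on the parameter. The paper instead fixes the polynomial $P(x) = (x^p + c)(ax^p + ac + 1)$ and varies the \emph{extension} $K_t = k(\sqrt[p]{tab})$, so that $X_t$ depends only on $t \bmod k^{\times p}$ and $X_1$ is precisely the Hasse-principle counterexample of Proposition~\ref{prop:HPcounterex}; in particular $X_t(k) = \emptyset$ whenever $t \in k^{\times p}$. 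For this parametrization to make sense one first needs $\mu_p \subseteq k$, a reduction carried out in Lemma~\ref{lem:reduction} that your proposal does not address.

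Even with the correct family, the clean dichotomy ``$X_t(k) \neq \emptyset \iff t \notin k^{\times p}$'' that you are aiming for is simply false. Schinzel's hypothesis (via~\cite{CTSSD-Schinzel}) gives only that $X_t(k) \neq \emptyset$ iff there is no Brauer--Manin obstruction on $X_t$; the paper must then prove separately (Theorem~\ref{thm: finite classes}, whose key input is the local surjectivity Lemma~\ref{surjection}) that only \emph{finitely many} classes in $k^\times/k^{\times p}$ carry such an obstruction. The upshot is that $\{t : U_t(k) \neq \emptyset\}$ is a diophantine subset of $k^\times \setminus k^{\times p}$ missing only finitely many cosets, and one finishes with the combinatorial union argument from~\cite{Poonen-nonsquares}. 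Your proposal has no analog of this finiteness step, and without it the argument cannot be completed.
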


			The following corollary follows almost immediately.

			\begin{cor}
				\label{cor: mth powers}
				Assume Schinzel's hypothesis.  For any number field $k$ and any natural number $m$, the set $k^{\times} \setminus k^{\times m}$ is diophantine over $k$.
			\end{cor}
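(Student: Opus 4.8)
The plan is to deduce Corollary~\ref{cor: mth powers} from Theorem~\ref{thm: pth powers} by a short group-theoretic reduction, using only the standard closure properties of diophantine sets over a field. I would first record these: over a domain $k$, finite unions of diophantine subsets of $k$ are diophantine (if $A = \{x : \exists \vec y,\ f_i(x,\vec y)=0\ \forall i\}$ and $B = \{x : \exists \vec z,\ g_j(x,\vec z)=0\ \forall j\}$, then $A\cup B = \{x : \exists(\vec y,\vec z),\ f_i(x,\vec y)\,g_j(x,\vec z)=0\ \forall i,j\}$), and likewise finite intersections, preimages under polynomial maps $k\to k$, and the operation of adjoining an auxiliary variable and then projecting it away all preserve diophantineness. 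Moreover $k^\times = \{x:\exists y,\ xy=1\}$ and $\{0\}$ are diophantine.

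Next I would reduce to prime powers. Write $m = \prod_{p\mid m} p^{e_p}$ with $e_p = v_p(m)$. The inclusion $k^{\times m}\subseteq k^{\times p^{e_p}}$ is clear, and conversely, iterating the observation that $a = u^r = v^s$ with $\gcd(r,s)=1$ forces $a = (v^\alpha u^\beta)^{rs}\in k^{\times rs}$ (where $\alpha r + \beta s = 1$) shows that $a\in k^{\times p^{e_p}}$ for all $p\mid m$ implies $a\in k^{\times m}$; hence $k^{\times m} = \bigcap_{p\mid m} k^{\times p^{e_p}}$ and
\[
 k^\times \setminus k^{\times m} \;=\; \bigcup_{p\mid m}\bigl(k^\times \setminus k^{\times p^{e_p}}\bigr).
\]
By the closure properties it suffices to prove that $k^\times\setminus k^{\times p^e}$ is diophantine for each prime power $p^e$, which I would do by induction on $e$, the case $e=1$ being exactly Theorem~\ref{thm: pth powers}. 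For $e\ge 2$, let $\mu_p(k)=\{\zeta_1,\dots,\zeta_t\}$ be the set of $p$-th roots of unity in $k$ (a finite, explicitly known set, with $t\in\{1,p\}$). Since the $p$-th roots of a $p$-th power $b^p$ in $k$ are precisely $\zeta_1 b,\dots,\zeta_t b$, and $a$ is a $p^e$-th power exactly when some $p$-th root of $a$ is a $p^{e-1}$-th power, one gets for $a\in k^\times$
\[
 a \notin k^{\times p^e}
 \quad\Longleftrightarrow\quad
 \bigl(a\notin k^{\times p}\bigr)
 \quad\text{or}\quad
 \bigl(\exists b:\ b^p = a \ \text{ and } \ \zeta_i b\notin k^{\times p^{e-1}}\ \text{for all } i\bigr).
\]
The first disjunct is diophantine by Theorem~\ref{thm: pth powers}; in the second, each condition $\zeta_i b\notin k^{\times p^{e-1}}$ is diophantine in $b$ by the inductive hypothesis composed with the linear map $b\mapsto\zeta_i b$, the finite conjunction over $i$ stays diophantine, adjoining $b^p=a$ and projecting away $b$ keeps it so, and finally intersecting with $k^\times$ (to discard $a=0$) and unioning the two disjuncts does too. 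Corollary~\ref{cor: mth powers} follows.

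I expect the one genuine subtlety to be the treatment of $\mu_p(k)$: when $\mu_p\subseteq k$ (which for $p=2$ always holds) the naive description of ``$a$ is not a $p^e$-th power'' quantifies \emph{universally} over the $p$ distinct $p$-th roots of $a$, and a universal quantifier is not obviously diophantine. The resolution is that those roots are the constant multiples $\zeta_1 b,\dots,\zeta_t b$ of a single existentially quantified $b$, so the universal quantifier collapses to a finite conjunction and never leaves the diophantine class; everything else is bookkeeping with the closure properties.
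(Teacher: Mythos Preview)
Your proof is correct and follows the same two–step strategy as the paper: first reduce to prime powers via the identity $k^\times\setminus k^{\times m}=\bigcup_{p\mid m}\bigl(k^\times\setminus k^{\times p^{e_p}}\bigr)$, then induct on the exponent. The difference lies in the inductive step. The paper asserts the equality $A_{p^{n+1}}=A_p\cup\{t^p:t\in A_{p^n}\}$, but in fact only the inclusion $\subseteq$ holds in general: over $k=\Q(\zeta_3)$ with $p=3$ and $n=1$, the element $t=\zeta_3$ lies in $A_3$ (since $\mu_9\not\subseteq k$), yet $t^3=1\notin A_9$. Your formulation---requiring that \emph{every} $p$-th root $\zeta_i b$ of $a$ lie in $A_{p^{e-1}}$, written as a finite conjunction indexed by the explicit set $\mu_p(k)$---is exactly what is needed to make the equivalence true, and your remark that this universal quantifier collapses to a finite conjunction (hence stays diophantine) is the key observation. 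So your argument is not merely a variant of the paper's; it quietly repairs a small gap in it.
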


			Poonen proved Theorem~\ref{thm: pth powers} for $p = 2$, unconditionally, using Ch\^atelet surfaces~\cite{Poonen-nonsquares}.  Our proof of Theorem~\ref{thm: pth powers} reduces to his; we require Schinzel's hypothesis to know that the Brauer-Manin obstruction is the only one for certain Ch\^atelet $p$-folds, namely those for which $P(x)$ splits as a product of two irreducible degree $p$ polynomials~\cite{CTSSD-Schinzel}*{Examples~1.6} (see also~\cite{CTS-Schinzel}); this result holds unconditionally when $p = 2$ by the landmark work of Colliot-Th\'el\`ene, Sansuc and Swinnerton-Dyer~\cite{CTSSDChatelet2}.  For an alternative proof of Theorem~\ref{thm: pth powers} in the case $p = 2$ and $k = \Q$, see~\cite{Koenigsmann}*{Proposition~17}.

			In a different direction, Poonen used Ch\^atelet surfaces in a ground-breaking paper to prove the insuffiency of the \'etale-Brauer set~\cite{Poonen-insufficiency}.  Specifically, for any global field $k$ of characteristic different from $2$, he constructed a $3$-fold $Y$ over $k$ that has no \'etale-Brauer obstruction and yet has no $k$-rational points.  Using Ch\^atelet $p$-folds, we obtain the following straightforward generalization of Poonen's construction.

			\begin{thm}
			\label{thm:insufficiency}
				Let $k$ be any global field such that either $k$ contains the $p^{th}$ roots of unity or $\Char(k) = p$. Then there exists a $(p+1)$-fold $Y$ such that $Y$ has no $k$-rational points and has non-empty \'etale-Brauer set.
			\end{thm}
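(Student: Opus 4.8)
The plan is to mimic Poonen's construction from \cite{Poonen-insufficiency}, replacing Ch\^atelet surfaces by Ch\^atelet $p$-folds, and then check that each step in his argument goes through in the slightly more general setting. Recall the structure of Poonen's proof: he builds a variety $\calX$ fibered over a curve (or over $\PP^1$) whose generic fiber is a Ch\^atelet surface chosen so that the fibers over rational points all fail the Hasse principle, while the étale-Brauer set is kept nonempty by a cohomological/geometric mechanism involving a carefully chosen finite étale cover and the vanishing of the relevant Brauer classes on the total space. Our substitute for the Ch\^atelet surface input is the Ch\^atelet $p$-fold constructed in \S\ref{sec:HassePrinciple} over a global field $k$ with $\mu_p \subseteq k$ or $\Char(k) = p$ that violates the Hasse principle; by the Picard/Brauer computation of \S\ref{sec:PicardBrauer} we have an explicit understanding of $\Br$ of these $p$-folds, which is exactly what is needed to run the étale-Brauer bookkeeping. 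Since a Ch\^atelet $p$-fold has dimension $p$, fibering it over a curve will produce a variety of dimension $p+1$, matching the statement.

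The key steps, in order, are as follows. First, recall from \S\ref{sec:ProperModels} an explicit smooth proper model $V$ of a Ch\^atelet $p$-fold over $k$, and from \S\ref{sec:HassePrinciple} the construction that makes $V$ a counterexample to the Hasse principle; record the structure of $\Br V / \Br k$ from \S\ref{sec:PicardBrauer}. Second, following Poonen, choose an auxiliary curve $C$ over $k$ and a family $\calV \to C$ whose fiber over each point of some dense open is a Ch\^atelet $p$-fold of the above type — concretely, one lets the polynomial $P(x)$ and the norm-form data vary in a one-parameter family, arranging (by a Bertini-type or explicit-parameter argument) that every fiber above a $k$-point of $C$ violates the Hasse principle. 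Third, introduce the "BM-to-étale-Brauer" gadget: take a finite étale cover $W \to C$ (or an appropriate torsor under a finite group scheme) so that $\calV \times_C W$, or rather a modification thereof, has the property that no étale-Brauer obstruction survives — this is where one must translate Poonen's Lemma on the triviality of the pulled-back Brauer classes, using the $\S\ref{sec:PicardBrauer}$ description in place of the classical Ch\^atelet-surface Brauer group. Fourth, assemble: show $Y(k) = \emptyset$ because any $k$-point would lie over a $k$-point of $C$ and hence in a Hasse-principle-violating fiber (which has points everywhere locally but not globally — one also checks $Y(\Adeles_k) \neq \emptyset$), while the étale-Brauer set is nonempty by the gadget from Step 3. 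Finally, verify smoothness and the dimension count $\dim Y = \dim C + \dim(\text{fiber}) = 1 + p = p+1$.

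The main obstacle I expect is Step 3: ensuring that the étale-Brauer set stays nonempty. In Poonen's surface case this rests on two facts — that Ch\^atelet surfaces are geometrically rational with $\Pic$ and $\Br$ completely understood, and that one can find a finite étale cover killing the obstruction — and the analogue requires that the Ch\^atelet $p$-fold behave well enough cohomologically. The computation in \S\ref{sec:PicardBrauer} should supply the needed control of $\Br V / \Br_0 V$ (in particular that it is finite and explicitly generated, presumably by cyclic classes of order $p$ coming from the norm form), but one must check that the Brauer classes obstructing the Hasse principle on the fibers become trivial after pulling back to the total space of a suitable cover, and that no \emph{new} obstruction is introduced by the fibration structure. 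Concretely this means verifying a Leray/Hochschild--Serre computation for $\Br$ of the total space relative to the base — the same computation Poonen does, but now with $p$-torsion classes rather than $2$-torsion ones, which is where the hypotheses $\mu_p \subseteq k$ or $\Char k = p$ (already needed to produce the Hasse-principle violation) are essential. A secondary, more routine obstacle is arranging the one-parameter family in Step 2 so that \emph{every} $k$-fiber (not just a general one) violates the Hasse principle; as in Poonen's work this is handled by writing the defining data over a function field and invoking a specialization/good-reduction argument, and it should go through verbatim since our \S\ref{sec:HassePrinciple} construction is explicit.
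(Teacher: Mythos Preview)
Your overall strategy---follow Poonen's construction from \cite{Poonen-insufficiency}, substituting the Ch\^atelet $p$-fold of Proposition~\ref{prop:HPcounterex} for his Ch\^atelet surface---is exactly what the paper does. But your description of Poonen's mechanism in Steps~2 and~3 is garbled in ways that would prevent you from actually carrying the argument out.

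In Step~2, Poonen does \emph{not} arrange that every fiber over a $k$-point of $C$ violates the Hasse principle via a ``Bertini-type or explicit-parameter argument''; Bertini controls generic fibers, not the (possibly infinite) set of $k$-fibers. Instead, he builds a family $\calV \to \PP^1$ with two distinguished fibers $\calV_0$ (having a $k$-point) and $\calV_\infty$ (the Hasse-principle violator), and then pulls back along a map $C \to \PP^1$ from a curve $C$ chosen so that $C(k)$ consists of a \emph{single} point, which maps to $\infty$. This is why the paper spells out the precise replacement for the section $s_1$ in Poonen's \S6: the family over $\PP^1$ is built inside a projective bundle via a section interpolating the two polynomials, namely $u^p \widetilde{P}_{\infty}(w,x) + v^p \widetilde{P}_0(w,x)$ (or its Artin--Schreier analogue $u^p \widetilde{P}_{\infty}(w,x) + u^{p-1}v\widetilde{P}_{\infty}(w,x) + v^p \widetilde{P}_0(w,x)$ when $\Char k = p$).

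In Step~3, you have the logic of the \'etale-Brauer set backwards. One does not ``introduce a finite \'etale cover $W\to C$'' to kill an obstruction; rather, one must show that for \emph{every} torsor $Z \to Y$ under \emph{every} finite group scheme, some twist has nonempty Brauer--Manin set. Poonen's argument produces a single adelic point of $Y$ lying in the fiber $\calV_\infty$ over the unique $k$-point of $C$ and shows it survives all such obstructions; the heart of this is a computation that the relevant Brauer classes on the total space (and its \'etale covers) are ``vertical'', i.e.\ their evaluation depends only on the image in the base. This is precisely Poonen's \S5, which the paper singles out as the one nontrivial point in the generalization: it does extend to $p$-folds, but the cleaner route the paper recommends is to substitute \cite{CT-0cycles}*{Prop.~2.1}, which generalizes transparently. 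Your proposed ``Leray/Hochschild--Serre computation for $\Br$ of the total space'' is in the right neighborhood, but until you recognize that the goal is to control $\Br$ of \emph{arbitrary} covers rather than to construct a single favorable one, you will not be able to close the argument.
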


			It is worth emphasizing that the significant ideas in the proofs of Theorems~\ref{thm: pth powers} and~\ref{thm:insufficiency} are due to Poonen~\cites{Poonen-nonsquares,Poonen-insufficiency}. We include these theorems only to support our claim that Ch\^atelet $p$-folds are interesting objects, amenable to explicit analysis. 

		\subsection{Notation}
			Throughout $p$ prime denotes a rational prime.  We write $\mu_p$ for the group of $p^{\textup{th}}$ roots of unity and $\zeta_p$ for a fixed a generator for $\mu_p$.  For a global field $k$ and any finite set $S$ of places of $k$, we write $\OO_{k,S}$ for the ring of $S$-integers.  If the characteristic of $k$ is zero, then $\OO_k$ will denote the usual ring of integers.  For a place $v$ of $k$, let $k_v$ denote the completion of $k$ at $v$, let $\OO_v$ denote the ring of integers of $k_v$, and let $\F_v$ denote the residue field. For a fixed separable closure $\ksep$ of $k$, we write $G_k$ for the absolute Galois group $\Gal(\ksep/k)$. For a $k$-scheme $X$, we write $\Xsep$ for $X\times_k \ksep$. 

		\subsection*{Acknowledgements}

			We thank Bjorn Poonen for suggesting we prove Theorem~\ref{thm: pth powers} and helpful conversations.  We also thank David Harari, Brendan Hassett and Alexei Skorobogatov for helpful discussions. We thank Tim Browning, Brian Conrad, Jean-Louis Colliot-Th\'el\`ene, Yongqi Liang and the anonymous referee for comments and corrections; we are particularly grateful to Colliot-Th\'el\`ene for the proof of Lemma~\ref{lem:CT}.

	\section{Smooth proper models of norm hypersurfaces}\label{sec:ProperModels}

		Let $K/k$ be a separable finite extension of degree $n$ and let $P(x) \in k[x]$ be a separable polynomial of degree $dn$, where $d$ is a positive integer.  Let $X_0$ be the affine \defi{norm hypersurface}
			\begin{equation}
			\label{eq:normic}
				\N_{K/k}(\vec{z}) = P(x)
			\end{equation}
			in $\Aff^{n + 1}_k$.  We construct a smooth proper model $X$ of $X_0$ that extends the map $X_0 \to \Aff^1_k$ given by $(\vec{z},x) \mapsto x$ to a map $X \to \PP^1$.

		Let $\calE$ be a rank $n + 1$ vector sheaf on $\PP^1_k$.  Given a section $s \in \Gamma(\PP^1, \Sym^n(\calE))$, we can construct the $n$-dimensional variety $V(s) \subseteq \PP\calE$.  By choosing a particular $\calE$ and $s$, we can ensure that $V(s)$ has an open affine isomorphic to $X_0$.

		Set $\calE = \left(\oplus_{i = 1}^n \OO\right)\oplus\calO(d)$, and let $s_2$ be the homogenization $\Ptilde(x,w) = w^{dn}P(x/w)$ in $\Gamma(\PP^1, \OO(d)^{\otimes n})$.  Let $s_1 = \N_{K/k}(\vec{z}) \in \Gamma(\PP^1, \Sym^n(\oplus_{i = 1}^n\OO))$.  Then $X_{K/k, P(x)} := V(s_1 - s_2) \in \PP\calE$ is a compactification of $X_0$.

		\begin{prop}
			The scheme $X = X_{K/k, P(x)}$ is smooth over $k$.
		\end{prop}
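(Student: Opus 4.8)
I would prove smoothness by the Jacobian criterion on an explicit affine cover of $\PP\calE$, after base changing to $\kbar$ and diagonalizing the norm form; the only substantive input will be that $P$ has no repeated root. Concretely, $\pi\colon\PP\calE\to\PP^1$ is a $\PP^n$-bundle, and over each of the two standard charts $\{w\neq 0\}$ and $\{x\neq 0\}$ of $\PP^1$ the sheaf $\calE=\left(\oplus_{i=1}^n\OO\right)\oplus\OO(d)$ is trivial, so $\PP\calE$ restricts there to $\PP^n_{[z_1:\cdots:z_n:u]}\times\Aff^1$, with $z_1,\dots,z_n$ the fibre coordinates of the trivial summands and $u$ that of $\OO(d)$. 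Unwinding $\pi_*\OO_{\PP\calE}(n)=\Sym^n\calE$, one sees $X$ is cut out over $\{w\neq 0\}$, with affine coordinate $x$, by $\N_{K/k}(z_1,\dots,z_n)-P(x)\,u^n$, and over $\{x\neq 0\}$, with affine coordinate $w$, by $\N_{K/k}(z_1,\dots,z_n)-\Ptilde(1,w)\,u^n$; here $\Ptilde(1,w)=w^{dn}P(1/w)$ is again separable of degree $dn$ because $\deg P=dn$ forces its constant term (the leading coefficient of $P$) to be nonzero. Thus the two base charts are symmetric and it suffices to treat $\{w\neq 0\}$.

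Smoothness over $k$ is geometric regularity, so I would base change to $\kbar$. Separability of $K/k$ gives $K\otimes_k\kbar\cong\kbar^n$, hence after applying the automorphism of $\PP\calE_{\kbar}$ induced by a suitable element of $\operatorname{GL}_n(\kbar)\subseteq\Aut(\calE_{\kbar})$ — which carries regular points to regular points — the norm form becomes $z_1z_2\cdots z_n$. It then suffices to show $Z=\{z_1\cdots z_n=P(x)u^n\}$ is regular, which I would check on the $n+1$ standard charts $\{u\neq 0\}$ and $\{z_i\neq 0\}$ of $\PP^n$. On $\{u=1\}$ the equation is $F=z_1\cdots z_n-P(x)$: vanishing of all $\partial F/\partial z_i=\prod_{j\neq i}z_j$ forces at least two of the $z_j$ to vanish, so $z_1\cdots z_n=0$ and hence $P(x)=0$, while $\partial F/\partial x=-P'(x)=0$, and $P(x)=P'(x)=0$ contradicts separability. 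On each chart $\{z_i=1\}$ the equation is $F=z_1\cdots\widehat{z_i}\cdots z_n-P(x)u^n$; for $u\neq 0$ the vanishing of $\partial F/\partial u=-nP(x)u^{n-1}$ and $\partial F/\partial x=-P'(x)u^n$ together with $F=0$ again pins down a common root of $P$ and $P'$ (with a small adjustment when the characteristic divides $n$, where one instead notes that two of the remaining $z_j$ must vanish), and the case $u=0$ must be handled separately, since there $\partial F/\partial u$ and $\partial F/\partial x$ vanish identically and regularity has to be read off from the $\partial F/\partial z_j$ together with $F=z_1\cdots\widehat{z_i}\cdots z_n=0$.

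The crux is exactly this $u=0$ analysis, together with its mirror image over $x=\infty$ coming from the other base chart: the divisor $\{u=0\}$ and the fibre over $x=\infty$ are precisely the locus adjoined in forming the compactification, so that is where smoothness is genuinely at stake and where separability of $P$ — and the normalization $\deg P=dn$, which controls the fibre at infinity — must be brought to bear; by contrast the affine part $X_0$ is obviously smooth, being fibred over $\Aff^1$ with fibres torsors under the norm-one torus of $K/k$. I would also watch the characteristics dividing $n$ or $dn$, where several of the partials above degenerate and the bookkeeping in these boundary charts needs extra care.
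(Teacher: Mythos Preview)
Your approach via explicit charts and the Jacobian criterion is sound and more careful than the paper's two-line argument, and you are exactly right that the locus $\{u=0\}$ is where smoothness is genuinely decided. But you stop short of actually carrying out that case, and if you do, you find an obstruction whenever $n\geq 3$. In the chart $z_1=1$, $w=1$, with coordinates $z_2,\ldots,z_n,u,x$ and equation $F=z_2\cdots z_n-P(x)\,u^n$, take any point with $u=0$ and $z_2=z_3=0$: then $\partial F/\partial z_j=\prod_{\ell\ge 2,\,\ell\neq j}z_\ell$ contains $z_2$ or $z_3$ as a factor, while $\partial F/\partial u=-nP(x)u^{n-1}$ and $\partial F/\partial x=-P'(x)u^n$ both vanish at $u=0$, and $F=0$ there too. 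Hence $X_{\kbar}$ is singular along the whole locus $\{u=0,\ z_i=z_j=0\}$ for each pair $i\neq j$, and this locus sweeps over \emph{every} point of $\PP^1$---separability of $P$ is of no help. Equivalently, already the generic fibre $z_1\cdots z_n=c\,u^n\subset\PP^n$ (with $c\neq 0$) is singular at each coordinate point $[0:\cdots:1:\cdots:0:0]$, so for $n\geq 3$ it is not a Severi--Brauer variety at all.

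Thus the gap in your proposal is not a missing routine detail but a missing computation that, once performed, contradicts the statement for $n\ge 3$; the paper's own proof glosses over precisely this by tacitly assuming that $X\to\PP^1$ is smooth away from the fibres over roots of $P$. For $n=2$ the offending locus is empty and both arguments go through (your $\partial F/\partial z_2=1$ on $\{z_1=1\}$); for $n\ge 3$ the model $V(s)\subset\PP\calE$ as described is genuinely singular along $\{u=0\}$, and one must either resolve there or use a different compactification (e.g.\ an actual Severi--Brauer bundle rather than a bundle of degree-$n$ norm hypersurfaces). Two minor side remarks: your claim that $\Ptilde(1,w)$ has degree $dn$ uses $P(0)\neq 0$, which is not assumed; and $X_0$ is not literally fibred in torsors, since the fibres over roots of $P$ are the singular cones $\N=0$---though your $\{u=1\}$ chart computation handles those correctly.
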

		\begin{proof}
			Since $\PP^1$ is smooth over $k$, $X$ is smooth at all points where $X\to \PP^1$ is smooth.  Thus it remains to show that the singular points in the fibers where $P(x) = 0$ are smooth points of $X$.  Since $P(x)$ is separable, this follows from the Jacobian criterion.
		\end{proof}

		We refer to $X$ as a \defi{degree $n$ normic bundle}. As mentioned earlier, we are mainly interested in Ch\^atelet $p$-folds, i.e.\ where $K/k$ is a cyclic extension of prime degree $p$ and $P(x)$ has degree $2p$.

	\section{The Picard and Brauer group of degree $p$ normic bundles}
	\label{sec:PicardBrauer}

		Let $K/k$ be cyclic of prime degree $p$, let $P(x)\in k[x]$ be a separable polynomial of degree $d$ divisible by $p$, and let $X := X_{K/k, P(x)}$ be a degree $p$ normic bundle.  

		\subsection{The Picard group}
		
			\begin{prop}\label{prop:Pic}
				The geometric Picard group of $X$ is of rank $d(p - 1) + 2$ and is freely generated by a smooth fiber, $(p-1)$ of the components of each degenerate fiber, and a section.  The group $\Pic \Xsep$ fits into the following (non-split) exact sequence of Galois modules.
				\[
					0 \to \frac{\oplus_{i = 1}^d\left(\Ind_K^k(\Z)\cdot \alpha_i\right)}{\langle \beta \cdot\alpha_i - \beta\cdot \alpha_j : \beta \in \left(\Ind_K^k(\Z)\right)^{G_k}\rangle}\to \Pic \Xsep \to \Z \to 0
				\]
				where $\alpha_i$ is a root of $P(x)$ for all $i$.
			\end{prop}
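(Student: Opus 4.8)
The plan is to compute $\Pic \Xsep$ by first base-changing to $\ksep$, where the cyclic extension $K/k$ splits and the normic bundle becomes a product-of-projective-spaces bundle over $\PP^1$. Concretely, over $\ksep$ the form $\N_{K/k}(\vec z)$ becomes $z_1 z_2 \cdots z_p$ (after a linear change of coordinates), so the equation $\N_{K/k}(\vec z) = \Ptilde(x,w)$ defines, fiberwise over $\PP^1$, a union of hyperplanes whose generic fiber is a smooth hypersurface of multidegree $(1,\dots,1)$ in $(\PP^{p-1})$ — more precisely, a split quadric-like Severi--Brauer fiber degenerating over the $d$ roots of $P$. Over a point where $P(x)=0$, the fiber degenerates into $p$ components (the hyperplanes $z_i = 0$ inside $\PP\calE$ restricted to that fiber), each isomorphic to $\PP^{p-1}$ times the remaining structure, meeting in the expected way. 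So the first step is to identify $\Xsep$ explicitly as a $\PP^1$-bundle-like object with $d$ degenerate fibers, each having $p$ rational components.

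Next I would invoke the standard structure theorem for the Picard group of such a conic-bundle-type (here Severi--Brauer-bundle-type) surface/variety: there is an exact sequence
\[
0 \to \bigoplus_{\text{degenerate fibers}} (\text{relations among components}) \to \Pic(\text{total space}) \to \Pic(\text{generic fiber}) \times \Z^{(\text{base})} \to 0,
\]
or more usefully, one builds $\Pic \Xsep$ from the class of a general fiber $F$, a section $e$ (which exists after base change since the $\OO(d)$ summand gives a section at infinity, or one uses the $z_i=0$ loci), and the components of the $d$ degenerate fibers. Each degenerate fiber contributes $p$ components $C_{i,1},\dots,C_{i,p}$ subject to the single relation $\sum_j C_{i,j} = F$; so each degenerate fiber contributes $p-1$ independent new classes, giving total rank $d(p-1) + 2$ once we add $[F]$ and $[e]$. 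This matches the claimed rank. The freeness and the explicit generating set follow by exhibiting intersection numbers (or by a direct cohomological computation $\Pic \Xsep = H^2(\Xsep,\Z(1))$-style argument, using that each fiber component is a rational variety with trivial $H^1$ and $H^3$).

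For the Galois-module structure, I would trace through how $G_k$ acts. The Galois group permutes the $d$ roots $\alpha_1,\dots,\alpha_d$ of $P$, and for a fixed root $\alpha_i$ it acts on the $p$ components of the corresponding degenerate fiber through $\Gal(K/k) \cong \Z/p$ cyclically (this is exactly the cyclic extension $K/k$ entering, via the form $\N_{K/k}$). Hence the subgroup of $\Pic\Xsep$ generated by degenerate-fiber components is, as a Galois module, $\bigoplus_{i=1}^d \Ind_K^k(\Z)\cdot\alpha_i$ (the induced module $\Ind_K^k\Z = \Z[\Gal(K/k)]$ capturing the $p$ components permuted cyclically, and the outer $\bigoplus_i$ capturing the $G_k$-permutation of the roots), modulo the relations $\sum_j C_{i,j} = \sum_j C_{i',j}$ for all $i,i'$ — i.e. modulo $\langle \beta\cdot\alpha_i - \beta\cdot\alpha_j : \beta \in (\Ind_K^k\Z)^{G_k}\rangle$, since $(\Ind_K^k\Z)^{G_k} = \Z\cdot(\sum_j \text{generator}_j)$ picks out precisely the fiber class. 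Finally $[F]$ is Galois-invariant and the quotient $\Pic\Xsep$ modulo this submodule is generated by the section, giving the map to $\Z$; the sequence is non-split because a $G_k$-equivariant splitting would produce a rational section of $X\to\PP^1$, hence a zero-cycle of degree $1$ on each Severi--Brauer fiber, contradicting nontriviality of the associated Brauer class when (as is the generic situation) $K/k$ is nontrivial.

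The main obstacle I anticipate is the bookkeeping needed to pin down the \emph{exact} Galois action on the fiber components — in particular, checking that $G_k$ acts on the $p$ components over a root $\alpha_i$ through the \emph{full} cyclic group $\Gal(K/k)$ (twisted appropriately by the residue field extension at $\alpha_i$), so that the submodule is genuinely $\bigoplus_i \Ind_K^k\Z$ and not some sub- or over-module — together with verifying that the only relations are the $d-1$ fiber-class relations (no hidden relations appear, which amounts to a rank count plus a torsion-freeness check). Establishing non-splitness rigorously in the stated generality (allowing $P$ to factor) also requires a small argument, but the fiber-component analysis is the technical heart.
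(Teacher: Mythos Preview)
Your approach is essentially the same as the paper's: both use the exact sequence $0 \to N \to \Pic \Xsep \to \Pic(\Xsep)_\eta \to 0$ coming from the fibration $\Xsep \to \PP^1$, identify the generic fiber as a (split) Severi--Brauer variety with Picard group $\Z$, and compute the vertical part $N$ by observing that each degenerate fiber breaks into $p$ hyperplanes permuted cyclically by $\Gal(K/k)$. The paper's proof is terser but follows exactly this route.

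One caveat worth noting: the paper does not actually prove the non-splitness assertion, and your proposed argument for it has a gap. A $G_k$-equivariant splitting would produce a $G_k$-invariant class in $\Pic\Xsep$ of fiber-degree $1$, but this is not the same as a $k$-rational divisor (let alone a rational section) --- the obstruction to lifting from $(\Pic\Xsep)^{G_k}$ to $\Pic X$ lives in $\Br k$, and even a genuine degree-$1$ divisor class need not be represented by a section. So your non-splitness sketch does not work as stated, though this does not affect the rest of the computation.
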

			\begin{proof}
				Let $(\Xsep)_{\eta}$ be the generic fiber of the map $\Xsep \to \PP^1$. We have the following exact sequence
				\begin{equation}\label{eqn:verticaldivisors}
				0 \to N \to \Pic \Xsep \to \Pic (\Xsep)_{\eta} \to 0,
				\end{equation}
				where $N$ is the subgroup generated by the vertical divisors.  Since $(\Xsep)_{\eta}$ is a Severi-Brauer variety, $\Pic (\Xsep)_{\eta} \isom \Z$, with trivial Galois action. The proposition follows because the degenerate fibers of $X$ lie over the roots of $P(x)$ and consist of the union of $p$-hyperplanes, all conjugate by elements of $\Gal(K/k)$.
			\end{proof}

		\subsection{The Brauer group of degree $p$ normic bundles}

			We use the cohomological description of the Brauer group, namely $\Br X := \HH^2_{\et}(X,\G_m)$. Note that since $X$ is a quasi-projective variety over a field, $\Br X$ is isomorphic to the group of Azumaya algebras over $X$ up to Morita equivalence~\cite{deJong}.

			Let $K/k$ be a finite cyclic extension of fields of degree $p$, and fix a generator $\sigma$ of $\Gal(K/k)$. Write $K[x]_\sigma$ for the ``twisted'' polynomial ring, where $\ell x = x{}^\sigma\!\ell$ for all $\ell \in K$.  If  $\Char k \neq p$, then given any $b \in k^{\times}$, we may construct the central simple $k$-algebra $K[x]_\sigma/(x^p - b)$; we denote this (cyclic) algebra by $(\chi_K,b)$, where $\chi_K\colon \Gal(K/k) \to \Z/p\Z$ is a character such that $\sigma \mapsto 1$.  Given two generators $\sigma$ and $\sigma'$ of $\Gal(K/k)$, we have $(\chi_K,b) = (\chi'_K,b)^m$, for $m$ such that $\sigma = \sigma'^m$.  If $\Char k = p$, then $(\chi_K,b)$ denotes the cyclic algebra $K[x]_\sigma/(x^p - x - b)$, with $\chi_K$ as above.

			\begin{theorem}\label{thm:Brauergroup}
				Let $P(x)$ be a separable polynomial and let $P_i(x)$ be the monic irreducible factors for $i = 1, \ldots, m$.  Let $c\in k^{\times}$ be such that $P(x) = c\prod P_i(x)$.  Denote $\deg P_i$ by $d_i$.  Let $K/k$ be a cyclic extension of prime degree and let $X := X_{K/k, P(x)}$.  If the splitting field of each $P_i(x)$ is not equal to $K/k$, then the map
				\[
					\frac{\left\{ (n_i)\in (\Z/p\Z)^m : \sum n_id_i \equiv 0 \pmod p\right\}}{(1, 1, \ldots, 1)}{\longrightarrow} \frac{\Br X}{\Br k} , \quad (n_i)_{i=1}^p \mapsto \left(\chi_K, \prod_i P_i(x)^{n_i}\right)_p
				\]
				is a group isomorphism. Otherwise, we may assume that the splitting field of $P_1(x)$ is equal to $K$.  In this case the above map is a surjection, and the kernel is generated by $e_1 := (1, 0, \ldots, 0)$.
			\end{theorem}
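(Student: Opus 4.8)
The plan is to use the fibration $f\colon X\to\PP^1$ and reduce to a residue computation in $\Br k(x)$. Its generic fibre $X_\eta$ is a smooth Severi--Brauer variety over $k(x)$ --- a compactification of the norm hypersurface $\N_{K/k}(\vec z)=P(x)$ --- so it represents the cyclic algebra $(\chi_K,P(x))$. By Hochschild--Serre, using $\Pic\overline{X_\eta}=\Z$ with trivial Galois action and $\Br\overline{X_\eta}=0$, the pullback $\Br k(x)\to\Br X_\eta$ is surjective, and by Amitsur's theorem its kernel is $\langle(\chi_K,P(x))\rangle$; hence $\Br X_\eta\cong\Br k(x)/\langle(\chi_K,P(x))\rangle$. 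Since $X$ is smooth, $\Br X\hookrightarrow\Br k(X)=\Br k(X_\eta)$, and by purity for the Brauer group $\Br X$ is the subgroup of $\Br k(X)$ of classes unramified at every codimension-one point of $X$. The conditions along horizontal divisors are automatic (the relevant local rings of $X$ and of $X_\eta$ agree), and, since $X\to\PP^1$ is smooth away from the roots of $P$, it remains to impose the vanishing of $\partial_D(f^*\beta)$ for $D$ the generic point of each fibre $X_s$. Thus $\Br X$ is identified with the set of $\beta\in\Br k(x)$, taken modulo $(\chi_K,P(x))$, for which $f^*\beta$ has trivial residue along every fibre of $f$.

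I would then compute these residues fibrewise via $\partial_D(f^*\beta)=e_{D/s}\cdot\Res_{\kappa(D)/\kappa(s)}\partial_s(\beta)$, noting that $\chi_K$ is unramified at every point of $\PP^1_k$ and that separability of $P$ forces every fibre to be reduced, so $e_{D/s}=1$ always. If $s$ is not a root of $P$ (in particular $s=\infty$, by the choice of compactification in \S\ref{sec:ProperModels}), then $X_s$ is a geometrically integral Severi--Brauer variety, so $\Res_{\kappa(D)/\kappa(s)}$ is injective on $\HH^1(-,\Q/\Z)$ and the condition reads $\partial_s\beta=0$. If $s=v_i$ is the closed point defined by the monic irreducible factor $P_i$, then $X_{v_i}$ is the projective norm hypersurface of $L_i/\kappa(v_i)$, where $L_i:=K\otimes_k\kappa(v_i)$; its normalization is $\PP^{p-2}_{L_i}$ (the $p$ geometric components are hyperplanes defined over $L_i$ and cyclically permuted by $\Gal(L_i/\kappa(v_i))$), so $\kappa(D)$ is purely transcendental over $L_i$. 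Therefore the kernel of $\Res_{\kappa(D)/\kappa(v_i)}$ on $\HH^1(-,\Q/\Z)$ is $\langle\chi_{L_i}\rangle$ when $L_i$ is a field and is $0$ when $L_i\cong\kappa(v_i)^{\oplus p}$, i.e.\ exactly when $K$ is the splitting field of $P_i$; so the condition at $v_i$ is $\partial_{v_i}\beta\in\langle\chi_{L_i}\rangle$ (respectively, $\partial_{v_i}\beta=0$ in the split case), where $\chi_{L_i}=\Res_{\kappa(v_i)/k}\chi_K$.

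Feeding this into Faddeev's exact sequence
\[
0\longrightarrow\Br k\longrightarrow\Br k(x)\xrightarrow{\ \oplus\partial_s\ }\bigoplus_{s\in(\PP^1)^{(1)}}\HH^1(\kappa(s),\Q/\Z)\xrightarrow{\ \sum\Cor\ }\HH^1(k,\Q/\Z)\longrightarrow 0,
\]
the subgroup $B\subseteq\Br k(x)$ of admissible classes satisfies $B/\Br k\cong\ker\bigl(\bigoplus_i\langle\chi_{L_i}\rangle\xrightarrow{\sum\Cor}\HH^1(k,\Q/\Z)\bigr)$, the summand at $v_i$ being omitted whenever $K$ is the splitting field of $P_i$. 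Identifying $\langle\chi_{L_i}\rangle$ with $\Z/p$ via $n\mapsto n\chi_{L_i}$ and using $\Cor_{\kappa(v_i)/k}\Res_{\kappa(v_i)/k}\chi_K=d_i\chi_K$, this corestriction is $(n_i)\mapsto\bigl(\sum_i n_id_i\bigr)\chi_K$, with kernel $\{(n_i):\sum_i n_id_i\equiv0\pmod p\}$ since $\chi_K$ has order $p$. A class realizing the residue datum $(n_i)$ is $(\chi_K,\prod_i P_i(x)^{n_i})$: its residue at $v_j$ is $n_j\chi_{L_j}$, it is unramified at the other finite places, and its residue at $\infty$ is $-\bigl(\sum_i n_id_i\bigr)\chi_K=0$. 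Hence $(n_i)\mapsto(\chi_K,\prod_i P_i(x)^{n_i})$ is an isomorphism $\{(n_i):\sum_i n_id_i\equiv0\}\xrightarrow{\ \sim\ }B/\Br k$.

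It remains to divide by $(\chi_K,P(x))$, since $\Br X\cong B/\langle(\chi_K,P(x))\rangle$. On $X$ one has $P(x)=\N_{K/k}(\vec z)$, so $(\chi_K,P(x))$ is split in $\Br k(X)$, and its residue at each $v_i$ is $1\cdot\chi_{L_i}$, so under the isomorphism above it corresponds to $(1,1,\dots,1)$ (here $\sum_i d_i=\deg P\equiv0\pmod p$). When $K$ is the splitting field of no $P_i$ this yields the stated isomorphism $\{(n_i):\sum_i n_id_i\equiv0\}/(1,\dots,1)\xrightarrow{\sim}\Br X/\Br k$. When $K$ is the splitting field of $P_1$, the $v_1$-summand is absent and $(\chi_K,P_1(x))$ is unramified at every point of $\PP^1_k$ --- its residue at $v_1$ is the restriction of $\chi_K$ to $\kappa(v_1)=K$, which is trivial, and at $\infty$ it is $-\deg(P_1)\chi_K=-p\chi_K=0$ --- so $(\chi_K,P_1(x))\in\Br\PP^1_k=\Br k$ and $e_1$ maps to $0$; thus the displayed map is surjective with kernel generated by $e_1$ (if several factors have splitting field $K$, one also quotients by the corresponding $e_i$). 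All of this is characteristic-free, the wild part in characteristic $p$ being covered by Kato's logarithmic version of the residue sequence. The step needing the most care is the second paragraph's analysis at the degenerate fibres: although $X_{v_i}$ is not geometrically connected and has $\Gamma(X_{v_i},\OO)=\kappa(v_i)$, its function field contains $L_i$ as its field of constants --- visible only after normalizing --- and it is exactly this that makes $\Br X/\Br k$ nontrivial; the other delicate point is the purity input used in the first paragraph.
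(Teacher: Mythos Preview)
Your argument is correct, and it takes a genuinely different route from the paper's. Both proofs begin identically, using Hochschild--Serre on the generic fibre to obtain $\Br X_\eta \cong \Br k(x)/\langle(\chi_K,P(x))\rangle$. From there the approaches diverge.

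The paper argues in two separate steps. For well-definedness it invokes purity (and, in characteristic~$p$, the ad hoc patching Lemma~\ref{lem:CT}) to check that each $(\chi_K,\prod_i P_i^{n_i})$ with $\sum n_id_i\equiv 0$ lies in $\Br X$. For surjectivity it passes through the identification $\Br X/\Br k\cong\HH^1(G_k,\Pic\Xsep)$, takes Galois cohomology of the sequence $0\to N\to\Pic\Xsep\to\Z\to 0$ of Proposition~\ref{prop:Pic}, and computes $\#\HH^1(G_k,N)$ using the explicit module structure and~\cite{Corn}*{Lemma~3.1}. The isomorphism is then established by comparing cardinalities.

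You instead characterise $\Br X$ inside $\Br X_\eta$ directly as those $\beta\in\Br k(x)$ (modulo $(\chi_K,P(x))$) whose pullback is unramified along every vertical divisor, and you compute the resulting conditions via the functoriality formula $\partial_D(f^*\beta)=\Res_{\kappa(D)/\kappa(s)}\partial_s\beta$ and Faddeev's exact sequence. This yields well-definedness, surjectivity, and the kernel in one stroke, with no Picard cohomology calculation; it is essentially the method behind~\cite{CTHS}*{Proposition~2.5}, which the paper cites in Remark~\ref{rem:CT} only for the inclusion. Your fibrewise analysis of $\ker\bigl(\HH^1(\kappa(v_i),\Q/\Z)\to\HH^1(\kappa(D),\Q/\Z)\bigr)$ via the field of constants $L_i$ of the (normalised) degenerate fibre is the key geometric input, and it is handled correctly.

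Two remarks. First, your appeal to ``Kato's logarithmic residue sequence'' in characteristic~$p$ is correct in spirit but terse; the paper is more cautious here, avoiding purity for the $p$-part and instead gluing local representatives via Lemma~\ref{lem:CT}. Second, the condition you isolate for $e_i$ to lie in the kernel is $K\subseteq\kappa(v_i)$, which is what the residue calculation actually gives; the paper phrases this as ``the splitting field of $P_i$ equals $K$'', and your parenthetical about several such factors is a useful clarification.
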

			\begin{proof}
				This argument is a generalization of~\cite{Skorobogatov-torsors}*{Props. 7.1.1 \& 7.1.2}.  We use the exact sequence of low-degree terms from the Hochschild-Serre spectral sequence
				\[
					0 \to \Pic X_{\eta} \to \Pic\left((X_{\eta})^{\textup{s}}\right)^{G_{k(x)}} 
					\to \Br k(x) \to 
					\ker\left(\Br X_{\eta} \to \Br (X_{\eta})^{\textup{s}}\right) 
					\to \HH^1(G_{k(x)},\Pic (X_{\eta})^{\textup{s}}).
				\]
				Since $(X_{\eta})^{\textup{s}}$ is rational, $ \Br (X_{\eta})^{\textup{s}} = 0$ and $\Pic (X_{\eta})^{\textup{s}} = \Z$ with a trivial Galois action. The quotient group $ \Pic\left((X_{\eta})^{\textup{s}}\right)^{G_{k(x)}}/\Pic X_{\eta}$ has order dividing $p$, and $(\chi_K, P(x))$ generates the order $p$ kernel $\Br k(x) \to \Br X_{\eta}$.  Thus we have
				\begin{equation}\label{eq:Braueriso}
					\Br X_{\eta} \isom  \frac{\Br k(x)}{\left(\chi_K, P(x)\right)}.
				\end{equation}
				We remark that this isomorphism also shows that $\Br \Xsep$ = 0, so $\Br X/\Br k \isom \HH^1(G_k, \Pic \Xsep)$.  We will return to this later.

				If $\sum n_id_i \equiv 0 \pmod p$, then, if $\Char(k) \neq p$, by the Purity theorem~\cite{Fujiwara}, $\calA := (\chi_K, \prod_i P_i(x)^{n_i})$ is everywhere unramified and hence is an element of $\Br X$.  If $\Char(k) = p$, then we must prove this ``by hand'', meaning that we first find open sets $U_i$ and elements $\calA_i \in \Br \kk(X)$, regular on $U_i$, such that $\calA = \calA_i$ in $\Br \kk(X)$ and then we use Lemma~\ref{lem:CT} below.  One can construct these representatives $\calA_i$ by using the the fact that $(\chi_K, P(x))$ is trivial in $\Br \kk(X)$; see~\cite{Poonen-chatelet, Viray-char2chatelet} for examples in the Ch\^atelet surface case.
				
				Combined with isomorphism~\eqref{eq:Braueriso}, the fact that these elements are unramified implies that the map $(n_i)_{i=1}^p \mapsto \left(\chi_K, \prod_i P_i(x)^{n_i}\right)_p$ is well-defined.  One can check that the kernel is generated by $\left(1, 1, \ldots, 1 \right)$ and, if $K/k$ is not disjoint from the splitting field of $P(x)$, by $e_1$.  It remains to prove that the map is surjective.

				Taking cohomology of~\eqref{eqn:verticaldivisors}, we obtain
				\[
					0 \to N^{G_k} \to (\Pic \Xsep)^{G_k} \to \Z 
					\to \HH^1(G_k, N) \to \HH^1(G_k, \Pic \Xsep) \to 0.
				\]

				Using the explicit generators given in Proposition~\ref{prop:Pic} and~\cite{Corn}*{Lemma~3.1}, one can check that
				\[
					0 \to N^{G_k} \to (\Pic \Xsep)^{G_k} \to \Z \to \Z/p\Z \to 0
				\]
				is exact and that $\HH^1(G_k, N)$ has the desired cardinality.  This shows that the map $(n_i)_{i=1}^p \mapsto \left(\chi_K, \prod_i P_i(x)^{n_i}\right)_p$ is surjective, completing the proof.
			\end{proof}

			Theorem~\ref{thm:Brauergroup} allows us to prove the following Corollary, which can also be deduced from~\cite{CTHS}*{Proposition~3.5}; see Remark~\ref{rem:CT} for further details.

			\begin{cor}
				If $P(x)$ is irreducible or the product of two irreducible polynomials, each of which has degree prime to $p$, then for any cyclic degree $p$ extension $K/k$, the Brauer group of $X_{K/k, P(x)}$ consists only of constant algebras.  
				In particular, there is no Brauer-Manin obstruction to the Hasse principle or weak approximation on $X_{K/k, P(x)}$. \qed
			\end{cor}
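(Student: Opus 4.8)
The plan is to reduce the whole statement to the computation of $\Br X/\Br k$ supplied by Theorem~\ref{thm:Brauergroup} and to check that, under the stated hypotheses on $P(x)$, this quotient vanishes; the assertion about the Brauer--Manin obstruction is then formal. Throughout I would keep the notation of that theorem, writing $P(x) = c\prod_{i=1}^m P_i(x)$ with each $P_i$ monic irreducible of degree $d_i$, and I would use the standing hypothesis of \S\ref{sec:PicardBrauer} that $d := \deg P = d_1 + \cdots + d_m$ is divisible by $p$.

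First I would dispose of the case $m = 1$, i.e.\ $P$ irreducible. Then $d_1 = d \equiv 0 \pmod p$, so the congruence $n_1 d_1 \equiv 0 \pmod p$ cutting out the source of the map in Theorem~\ref{thm:Brauergroup} holds for every $n_1$, and the source is $(\Z/p\Z)/\langle(1)\rangle = 0$. Hence $\Br X/\Br k = 0$, whether or not the splitting field of $P_1$ equals $K$ (in the first situation the map is an isomorphism onto $\Br X/\Br k$, in the second it is merely a surjection, but in both the source is trivial).

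Next I would treat the case $P = cP_1P_2$ with $\gcd(d_1,p) = \gcd(d_2,p) = 1$. The key preliminary point is that neither $P_i$ can have $K$ as splitting field: the Galois group of the splitting field of the irreducible polynomial $P_i$ acts transitively on its $d_i$ roots, so $d_i$ divides its order; were that field $K$, of degree $p$ over $k$, then $d_i \mid p$, forcing $d_i \in \{1, p\}$ --- but $d_i = 1$ would make the splitting field $k \neq K$, and $d_i = p$ is excluded by $\gcd(d_i,p) = 1$. So we are in the isomorphism case of Theorem~\ref{thm:Brauergroup}. Since $d_1 + d_2 \equiv 0 \pmod p$, the defining congruence $n_1 d_1 + n_2 d_2 \equiv 0 \pmod p$ becomes $d_1(n_1 - n_2) \equiv 0 \pmod p$, i.e.\ $n_1 \equiv n_2 \pmod p$ because $p \nmid d_1$; therefore the source is $\{(n,n) : n \in \Z/p\Z\}/\langle(1,1)\rangle = 0$, and again $\Br X/\Br k = 0$.

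Finally I would record the standard conclusion: in both cases every class in $\Br X$ lies in the image of $\Br k$, and for $\calA \in \Br k$ and any adelic point $(x_v) \in X(\Adeles_k)$ the reciprocity law of class field theory gives $\sum_v \inv_v(\calA(x_v)) = \sum_v \inv_v \calA = 0$; hence the Brauer--Manin set $X(\Adeles_k)^{\Br X}$ equals $X(\Adeles_k)$ and there is no Brauer--Manin obstruction to the Hasse principle or to weak approximation on $X_{K/k,P(x)}$. The only step that is not purely formal is the degree argument in the second case ruling out that some $P_i$ has $K$ as splitting field; I expect this to be the main (though modest) obstacle, since otherwise the extra kernel generator $e_1$ appearing in Theorem~\ref{thm:Brauergroup} would have to be handled separately.
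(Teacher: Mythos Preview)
Your proof is correct and follows exactly the route the paper intends: the corollary is marked with \qed\ because it is an immediate computation from Theorem~\ref{thm:Brauergroup}, and you have simply written out that computation. The only detail the paper leaves implicit and you make explicit is the degree argument ruling out $K$ as the splitting field of a factor of degree prime to $p$; this is indeed necessary and your justification via the orbit--stabilizer theorem is the expected one.
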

			
			The following lemma is used in the proof of Theorem~\ref{thm:Brauergroup} above and in Case 2 of Proposition~\ref{prop:HPcounterex} below in lieu of the Purity Theorem, when $\Char k = p$, to deal with the $p$-torsion of $X_{K/k,P(x)}$.  We are grateful to Jean-Louis Colliot-Th\'el\`ene for outlining the proof.
			
			\begin{lemma}
				\label{lem:CT}
				Let X be a regular, integral, quasi-compact scheme.  Suppose there is a Zariski open cover $\{U_i\}$ of $X$, together with elements $\alpha_i \in \Br U_i$ such that $\alpha_i = \alpha_j$ (all $i$ and $j$) when viewed as elements of $\Br \kk(X)$ under the natural inclusion $\Br U_i \to \Br \kk(X)$. Then there is an element $\alpha \in \Br X$ that restricts to $\alpha_i$ under the natural inclusion $\Br X \to \Br U_i$ for all $i$.
			\end{lemma}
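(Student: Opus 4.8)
The plan is to show that the local Brauer classes $\alpha_i \in \Br U_i$ glue to a global class by a Mayer--Vietoris / \v{C}ech argument, using crucially that $X$ is \emph{regular and integral} so that all restriction maps to the Brauer group of the function field $\kk(X)$ are injective. The key input is the theorem of Auslander--Goldman / Grothendieck: for a regular integral scheme $Y$, the natural map $\Br Y \to \Br \kk(Y)$ is injective, and more generally $\Br Y = \bigcap_{y} \Br \calO_{Y,y}$ inside $\Br \kk(Y)$, where the intersection is over (say) codimension-one points $y$ of $Y$. Thus $\Br Y$ is identified with the subgroup of $\Br \kk(Y)$ consisting of classes that are unramified at every codimension-one point of $Y$. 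This reduces the gluing problem to a statement about a single group, $\Br \kk(X)$.

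First I would fix the common class: since all the $\alpha_i$ have the same image $\alpha \in \Br \kk(X)$ under $\Br U_i \hookrightarrow \Br\kk(X)$ (the inclusions being injective because each $U_i$ is regular and integral, being open in $X$), the hypothesis is really that a single class $\alpha \in \Br \kk(X)$ lies in the subgroup $\Br U_i \subseteq \Br\kk(X)$ for every $i$. Next I would observe that, because $X$ is regular and integral, $\Br X = \bigcap_i \Br U_i$ as subgroups of $\Br \kk(X)$: indeed $\Br X = \bigcap_{x \in X^{(1)}} \Br \calO_{X,x}$ and $\Br U_i = \bigcap_{x \in U_i^{(1)}} \Br \calO_{X,x}$, and since $\{U_i\}$ covers $X$ every codimension-one point of $X$ lies in some $U_i$, so the intersection over all of $X^{(1)}$ equals the intersection of the $\Br U_i$. (Quasi-compactness of $X$ guarantees we only need finitely many $U_i$, though the argument does not formally require it once phrased via codimension-one points.) Therefore $\alpha$, lying in every $\Br U_i$, lies in $\Br X$; and by construction its restriction to $\Br U_i$ is $\alpha_i$, since both map to $\alpha$ in $\Br\kk(X)$ and $\Br U_i \hookrightarrow \Br\kk(X)$ is injective. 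This is exactly the desired conclusion.

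The main obstacle is making sure the purity-type identification $\Br Y = \bigcap_{y \in Y^{(1)}} \Br \calO_{Y,y}$ holds in the needed generality. In the good-characteristic / low-dimensional settings this is classical (Auslander--Goldman for regular domains of dimension $\le 2$, and Grothendieck's purity results), but for a general regular integral $X$ one wants the statement that $\Br Y$ injects into $\Br\kk(Y)$ with image the classes unramified in codimension one; the cleanest route is to cite Grothendieck's ``Le groupe de Brauer II'' (regularity gives injectivity $\Br Y \hookrightarrow \Br \kk(Y)$) together with the local purity statement $\Br \calO_{Y,y} = \ker(\Br \kk(Y) \to \HH^1(\kappa(y),\Q/\Z))$ for $\calO_{Y,y}$ a regular local ring of dimension one (a discrete valuation ring), and then assemble. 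In our application $X$ is a smooth variety over a field, so all of this is available without difficulty; the only care needed is that we make no smoothness hypothesis in the statement, so we should either cite the general regular-scheme purity results or, for safety, note that the lemma is applied only to smooth $X$ and invoke the classical statements. I expect the write-up to be short once the correct reference for $\Br Y = \bigcap_{y \in Y^{(1)}} \Br \calO_{Y,y}$ is pinned down.
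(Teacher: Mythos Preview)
Your argument has a genuine gap, and it is precisely the gap the lemma exists to fill. You reduce everything to the identity
\[
\Br X \;=\; \bigcap_{x \in X^{(1)}} \Br \calO_{X,x} \quad \text{inside } \Br \kk(X),
\]
and you propose to obtain this from Grothendieck's injectivity $\Br X \hookrightarrow \Br \kk(X)$ together with the DVR description of each $\Br \calO_{X,x}$ as a residue kernel. But those two inputs give only the \emph{easy} inclusion $\Br X \subseteq \bigcap_{x}\Br\calO_{X,x}$. The reverse inclusion --- that a class unramified at every height-one prime extends to all of $X$ --- is exactly purity for the Brauer group, and it does not ``assemble'' from the pieces you list. (Indeed, passing from ``locally in $\Br\calO_{X,x}$'' to ``in $\Br X$'' already requires a gluing statement of the type the lemma asserts, so the argument is circular.) Your fallback of citing the Purity Theorem for smooth varieties is also unavailable here: the paper introduces this lemma explicitly \emph{in lieu of} purity, to handle $p$-torsion Brauer classes in characteristic $p$, a case not covered by Gabber--Fujiwara.

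The paper's proof avoids purity altogether. It uses the divisor exact sequence of \'etale sheaves to identify $\Br X$ as the kernel of a residue map out of $\HH^2_{\et}(X, i_*\G_{m,\kk(X)})$, and uses the Leray spectral sequence for $i\colon \Spec \kk(X) \to X$ to identify $\HH^2_{\et}(X, i_*\G_{m,\kk(X)})$ as the kernel of $\Br \kk(X) \to \HH^0(X, R^2 i_*\G_{m,\kk(X)})$. The gluing then happens inside this intermediate group: the sheaf axiom for $R^2 i_*\G_{m,\kk(X)}$ (sections over $X$ inject into $\prod_i$ sections over $U_i$) lets a diagram chase lift the common image of the $\alpha_i$ in $\Br\kk(X)$ to $\HH^2_{\et}(X, i_*\G_{m,\kk(X)})$, and the residues vanish because they already vanish on each $U_i$. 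No purity, no circularity.
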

			
			\begin{proof}
			Let $i\colon \Spec \kk(X) \to X$ be the inclusion of the generic point, and let $X^{(1)}$ be the set of closed integral subschemes of $X$ of codimension 1. For $x \in X^{(1)}$, write $i_x\colon \Spec \kappa(x) \to X$ for the inclusion of the generic point.   The short exact sequence of \'etale sheaves on $X$
			\[
				0 \to \G_{m,X} \to i_*\G_{m,\kk(X)} \to \bigoplus_{x \in X^{(1)}} i_{x*}\Z \to 0
			\]
			gives rise to a long exact sequence in cohomology
			\begin{equation}
				\label{eq:LES in Etale Cohomology}
				0 \to \HH^2_{\et}(X,\G_m) \to \HH^2_{\et}(X,i_*\G_{m,\kk(X)}) \to \bigoplus_{x \in X^{(1)}}\HH^1(\kappa(x),\Q/\Z);
			\end{equation}
			see~\cite{Milne-EtaleCohomology}*{Chap.~III, Example 2.22}. Consider the commutative diagram
			\[
				\xymatrix{
					 & & & 0\ar[d] \\
					0 \ar[r] & \HH^2_{\et}(X,i_*\G_{m,\kk(X)}) \ar[r] \ar[d] & \Br \kk(X) \ar[d]^\Delta \ar[r] & \HH^0_{\et}(X,R^2i_*\G_{m,\kk(X)}) \ar[d] \\
					0 \ar[r] & \prod_i\HH^2_{\et}(U_i,i_*\G_{m,\kk(X)}) \ar[r] & \prod_i \Br \kk(U_i) \ar[r] & \prod_i \HH^0_{\et}(U_i,R^2i_*\G_{m,\kk(X)})
				}
			\]
			Here the top row comes from the low-degree exact sequence associated to the Leray spectral sequence for the morphism $i\colon \Spec \kk(x) \to X$, and we use the equality $R^1i_*\G_{m,\kk(X)} = 0$, which follows from Hilbert's Theorem 90. The bottom row is obtained similarly using the Zariski cover $\{U_i\}$ of $X$, and the map $\Delta$ is the diagonal embedding. Exactness of the last column is just part of the (\'etale) sheaf axiom for $R^2i_*\G_{m,\kk(X)}$. Using the analog of the sequence~\eqref{eq:LES in Etale Cohomology} for each $U_i$, we see that the $\alpha_i \in \Br U_i$ together give an element of $\prod_i\HH^2_{\et}(U_i,i_*\G_{m,\kk(X)})$ whose image in $\prod_i \Br \kk(U_i)$ is contained in the image of the map $\Delta$. A diagram chase shows that this element can be lifted to $\HH^2_{\et}(X,i_*\G_{m,\kk(X)})$, and by hypothesis, this element is in the kernel of the residue map in~\eqref{eq:LES in Etale Cohomology}. The lemma follows immediately.
			\end{proof}

		\begin{remark}
			\label{rem:CT}
			In~\cite{CTHS}, Colliot-Th\'el\`ene, Harari, and Skorobogatov give smooth partial compactifications of the affine varieties~\eqref{eq:normic} and study the Picard and Brauer groups of these partial compactifications.  Their results, applied to our compactification, already show that there is an inclusion of the Brauer group
			\[
				\frac{\Br X}{\Br k} \hookrightarrow \frac{\left\{ (n_i)\in (\Z/p\Z)^m \right\}}{(1, 1, \ldots, 1)}.
			\]
			See in particular~\cite{CTHS}*{Proposition~2.5}.
		\end{remark}

	\section{A Ch\^atelet $p$-fold that violates the Hasse principle}
	\label{sec:HassePrinciple}
		\begin{prop}\label{prop:HPcounterex}
			Fix a prime $p$.  Let $k$ be any global field such that either $\mu_p \subseteq k$ or $\Char(k) = p$.  Then there exists a Ch\^atelet $p$-fold $X$ over $k$ that violates the Hasse principle.
		\end{prop}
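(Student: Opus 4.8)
The plan is to build $X$ as a degree-$p$ analogue of Iskovskikh's Ch\^atelet surface $\N_{\Q(i)/\Q}(\vec z) = (x^2-2)(3-x^2)$: a Ch\^atelet $p$-fold with points in every completion of $k$ on which a nonconstant Brauer class obstructs the Hasse principle. I would split into Case~1, where $\Char k \neq p$ and $\mu_p \subseteq k$, taking $K = k(\sqrt[p]{a})$ with $a \in k^{\times}\setminus k^{\times p}$ and writing $(\chi_K,b)_p$ for the symbol algebra $(a,b)_p$; and Case~2, where $\Char k = p$, taking $K = k(\theta)$ with $\theta^p - \theta = a$, $a \in k\setminus\wp(k)$, $\wp(t) = t^p - t$. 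In Case~2 the Purity Theorem is unavailable for the $p$-torsion, so producing the relevant element of $\Br X$ and evaluating it at the places above $p$ will use the explicit-representative technique of Lemma~\ref{lem:CT}; the rest of the argument is uniform in the two cases.

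I would choose $a$ together with monic separable irreducible polynomials $P_1(x), P_2(x) \in k[x]$ of degree $p$ whose splitting fields differ from $K$ and which satisfy $P_1 - P_2 = 1$ (so that the resultant of $P_1$ and $P_2$ is $(-1)^p$, a unit), and a constant $c \in k^{\times}$; then $P(x) := c\,P_1(x)P_2(x)$ is separable of degree $2p$ and $X := X_{K/k, P(x)}$ is a Ch\^atelet $p$-fold. Since $d_1 = d_2 = p$ makes the congruence in Theorem~\ref{thm:Brauergroup} vacuous, that theorem gives $\Br X/\Br k \cong \Z/p\Z$, generated by the class of $\calA := (\chi_K, P_1(x))_p \in \Br X$; in Case~2 one first produces $\calA$ as an element of $\Br X$ using Lemma~\ref{lem:CT}, by exhibiting algebras regular on a Zariski cover of $X$ that agree with $\calA$ in $\Br \kk(X)$ --- possible because $(\chi_K,P(x))$ is trivial there, $P(x)$ being $\N_{K/k}(\vec z)$ on $X$. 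That same triviality yields the alternative description $\calA = (\chi_K, c^{-1}P_2(x))_p^{-1}$ on $X$. I would then pin down $a, P_1, P_2, c$ so as to achieve: (i) $c \notin \N_{K/k}(K^{\times})$ --- by the Hasse norm theorem for the cyclic extension $K/k$, $c$ then fails to be a local norm at a nonempty finite set $T$ of places; (ii) $X(k_v) \neq \emptyset$ for every place $v$; (iii) $Q_v \mapsto \inv_v\calA(Q_v)$ is constant on $X(k_v)$ for every $v$; (iv) $\sum_v \inv_v\calA(Q_v) \neq 0$. Granting (i)--(iv), evaluating $\calA$ shows the Brauer--Manin set $X(\Adeles_k)^{\Br}$ is empty while $X(\Adeles_k) \neq \emptyset$; since $X(k) \subseteq X(\Adeles_k)^{\Br}$, the Hasse principle fails.

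Conditions (ii) and (iii) are automatic outside a finite set of places. Where $v$ splits completely in $K$, $\N_{K_v/k_v}$ is surjective, so $X$ has $k_v$-points above every $x_v$ and $\inv_v\calA = 0$. For all but finitely many $v$, $X$ has good reduction and $\calA$ extends to an Azumaya algebra over the integral model; then, $X$ being proper, $X(k_v) = X(\OO_v)$ is nonempty by Lang--Weil applied to the (geometrically connected) special fiber, and $\inv_v\calA$ vanishes on $X(\OO_v)$ because $\Br \OO_v = 0$. The hypothesis $P_1 - P_2 = 1$ keeps this bad set small --- the resultant being a unit, $P_1$ and $P_2$ stay coprime modulo every prime --- and at a nonsplit $v$ where $P_1, P_2$ reduce to coprime polynomials it forces $v(P_1(x_v)) \equiv 0 \pmod p$ for points of $X(k_v)$, hence $\inv_v\calA = 0$ there. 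So (ii)--(iv) need only be arranged at the finitely many remaining places --- those dividing $p$, those where $K/k$ ramifies, the archimedean places, and those dividing the discriminant of $P$.

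The hard part will be exactly this last step: choosing $a$ and the coefficients of $P_1, P_2$, by weak approximation at the finitely many bad places, so that at each such place $v$ one simultaneously has a $k_v$-point of $X$, has $\inv_v\calA$ constant on $X(k_v)$ (equivalently, the class of $P_1(x_v)$ in $k_v^{\times}/\N_{K_v/k_v}(K_v^{\times})$ is independent of $x_v$ over the $x_v$ carrying a point of $X(k_v)$), and has the resulting local constants summing to a nonzero element of $\tfrac{1}{p}\Z/\Z$. For $p = 2$, $k = \Q$ this is Iskovskikh's balancing act: $c = -1$ is not a sum of two squares, $X(\R)$ forces $x^2 - 2 > 0$ so $\inv_\infty\calA = 0$, the coprimality of $x^2 - 2$ and $3 - x^2$ gives $\inv_q\calA = 0$ at odd $q$, and a $2$-adic computation gives $\inv_2\calA = \tfrac{1}{2}$ on all of $X(\Q_2)$. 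Carrying out the analogous construction --- and the accompanying local-invariant computations --- over an arbitrary global field, in particular at the places above $p$ in Case~2 where Lemma~\ref{lem:CT} supplies the local algebras needed to evaluate $\calA$, is where essentially all the work lies; the Brauer-group computation and the solubility at the generic places are formal.
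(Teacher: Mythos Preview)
Your strategy---construct a Ch\^atelet $p$-fold with $P(x) = cP_1(x)P_2(x)$ so that the cyclic algebra $(\chi_K,P_1(x))$ gives a Brauer--Manin obstruction---is exactly the paper's approach, and your reduction to finitely many bad places via Lang--Weil and the unit-resultant trick is correct. However, you have not given a proof: you explicitly say that arranging (i)--(iv) at the bad places ``is where essentially all the work lies'' and then stop. That \emph{is} the proposition; everything before it is standard bookkeeping.

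The paper's proof consists precisely of executing that step. In Case~1 it does not try to pin down abstract $P_1,P_2$ by weak approximation; instead it uses Chebotarev density and global class field theory to choose elements $a,b\in\OO_k$ generating prime ideals with prescribed congruences modulo $(1-\zeta_p)^{2p-1}$ and with large residue fields, sets $K=k(\sqrt[p]{ab})$, and takes the very specific polynomials $P_1(x)=x^p+c$ and $P_2(x)=ax^p+ac+1=aP_1(x)+1$ (so the unit-resultant relation is $P_2-aP_1=1$, not $P_1-P_2=1$, and neither monicity nor irreducibility of the $P_i$ is imposed). This rigid form is what makes the local computations tractable: existence of $k_v$-points and the constancy and values of the local invariants then follow from Poonen's Lemmas~5.3 and~5.5 verbatim. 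In Case~2 the construction is an Artin--Schreier analogue with $K=k[T]/(T^p-T+\gamma)$ and carefully chosen $f,g$ satisfying $g=a^{-4n}bf+\gamma$, after which the local analysis follows Viray's Lemmas~3.1 and~3.3. In both cases the ``balancing act'' you allude to is not done by a soft weak-approximation argument but by writing down explicit data for which the verifications have already been carried out in the literature. Your outline is a fair description of what such a proof must accomplish, but to turn it into a proof you need either to supply those explicit choices and local computations yourself or to reduce to existing ones, as the paper does.
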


		\begin{proof}
			This is a straightforward generalization of a construction of Poonen~\cite{Poonen-chatelet}*{\S\S5\&11} if $\mu_p \subseteq k$, and of~\cite{Viray-char2chatelet} if $\Char(k) = p$. We outline the specific necessary modifications for the reader's convenience.
			
			\textbf{Case 1a: $\mu_p \subseteq k$, $\Char(k) = 0$}

			Let $N$ be such that for any finite field $\F$ of cardinality greater than $N$, any smooth degree $p$ plane curve over $\F$ has at least $3p + 1$ $\F$-points.  By the Chebotarev density theorem and global class field theory, we can find $b\in \OO_k$, generating a prime ideal, such that $b \equiv 1 \pmod{ \left(1 - \zeta_p\right)^{2p-1}\OO_k}$ and $\#\F_b > N$.  Similarly, we can find $a\in \OO_k$, generating a prime ideal, such that $a \equiv 1\pmod{ \left(1 - \zeta_p\right)^{2p-1}\OO_k}, a \notin k_b^{\times p}$ and $\#\F_a > N$.  Let $c\in\OO_k$ be such that $b\mid\left(ac + 1\right)$ and let $K$ be the degree $p$ Kummer extension $k\left( \sqrt[p]{ab}\right)$. Let $X$ be the smooth projective model of
			\[
				\N_{K/k}\left(\vec{z}\right) = 
					\left(x^p + c\right)\left(ax^p + ac + 1\right).
			\]
			We claim that $X$ has a Brauer-Manin obstruction to the Hasse principle, given by the cyclic algebra $(\chi_K, x^p + c)$.  The proof proceeds exactly as in~\cite{Poonen-chatelet}*{\S5}: for the existence of local points, c.f.~\cite{Poonen-chatelet}*{Lemma 5.3} and for the Brauer-Manin obstruction, c.f.~\cite{Poonen-chatelet}*{Lemma 5.5}.

			\textbf{Case 1b: $\mu_p \subseteq k$, $\Char(k) \neq 0, p$}

				Fix a prime $\pp$, and let $\OO_k = \OO_{k, \{\pp\}}$.  Then use $a$, $b$ as in Case (1a), replacing the condition that $a,b \equiv 1 \pmod{ \left(1 - \zeta_p\right)^{2p-1}\OO_k}$ with the condition that $a,b$ are $p^{th}$ powers in $k_{\pp}$.  Then same construction as in Case (1a) gives a Ch\^atelet $p$-fold with a Brauer-Manin obstruction to the Hasse principle and the obstruction is caused by the same element. As in Case (1a), one uses~\cite{Poonen-chatelet}*{Lemmas 5.3 and 5.5} for the existence of local points and the computation of the Brauer-Manin obstruction, respectively.

			\textbf{Case 2: $\Char(k) = p$.}

				Let $\mathbb F$ denote the constant field of $k$ and let $n$ denote the order of $\mathbb F^{\times}$.  Fix a prime $\pp$ of $k$ of degree prime to $p$ and let $S = \{\pp\}$.  Let $\gamma \in \mathbb{F}$ be such that $T^p - T + \gamma$ is irreducible in $\mathbb{F}[T]$.    By the Chebotarev density theorem we can find elements $a,b \in \mathcal{O}_{k,S}$ that generate prime ideals of degree divisible by $p$ and degree prime to $p$, respectively, such that $a \equiv \gamma \pmod{b^2\OO_{k,S}}$.  These conditions imply that $v_\pp(a)$ is equivalent to $0\pmod p$ and negative and that $v_{\pp}(b)$ is prime to $p$ and negative.

				Define
				\begin{eqnarray*}
					f(x) & = & \left(a^{-4n} b\right)^{p-1} x^p - x - a b^{-1},\\
					g(x) & = & a^{-4np} b^p x^p - a^{-4n} b x - a^{1-4n} + \gamma.
				\end{eqnarray*}
				Note that $g(x) = a^{-4n} b f(x) + \gamma.$  
				Let $K = k[T]/(T^p - T + \gamma)$, and let $X$ be the Ch\^atelet surface given by
				\begin{equation*}\tag{$*$}\label{eq:chatelet}
					\N_{K/k}(\vec{z}) = f(x)g(x).
				\end{equation*}
				The cyclic algebras $\calA := (\chi_K, f(x))$, $(\chi_K, g(x))$ and $(\chi_K, f(x)/x^p)$ all represent the same class in $\Br \kk(X)$, so by Lemma~\ref{lem:CT}, we have $\calA \in \Br X$.	We claim that $X$ has a Brauer-Manin obstruction to the Hasse principle given by the element $\calA$.  The proof proceeds exactly as in~\cite{Viray-char2chatelet} (for the existence of local points, c.f.~\cite{Viray-char2chatelet}*{Lemma~3.1} and for the Brauer-Manin obstruction, c.f~\cite{Viray-char2chatelet}*{Lemma~3.3}), with one exception.  To show that $X(k_{\pp})$ is non-empty, one shows that $f(x)g(x)$ has valuation divisible by $p$ at $x = ab^{-1}\pi^{-m}$, where $\pi$ is a uniformizer for $\pp$ and $m$ is the smallest positive integer that is congruent to $-v_{\pp}(b) \mod p$. In addition, for the analog of \cite{Viray-char2chatelet}*{Lemma~3.3}, one uses that $T^p  - T - \gamma$ is irreducible in $\F[T]$ to show that the local invariant at $b$ is constant and nonzero, and that all other local invariants are trivial.
		\end{proof}

	\section{Proofs of Theorems~\ref{thm: pth powers}
	and~\ref{thm:insufficiency}}
	\label{sec:pfs}

		\subsection{Diophantine Sets}
		\label{subsec:diophantine_sets}
			Our goal in this subsection is to prove Theorem~\ref{thm: pth powers}.  We first need to generalize a few results from~\cite{Poonen-nonsquares}.  Throughout this section $k$ is a number field.

			\begin{lemma}\label{surjection}
				Fix $P(x)\in k[x]$ a separable polynomial of degree divisible by $p$, and let $\alpha \in k^\times$.  Let $K$ denote the cyclic extension $k(\sqrt[p]{\alpha})$ and let $X := X_{K/k, P(x)}$.  Fix a non-trivial character $\chi_K$ of $\Gal(K/k)$.  Then there is a finite set of places $S$, depending on $P(x)$ but not $\alpha$, such that if $v\notin S$, $\Br X \neq \Br k$, and $v(\alpha) \neq 0 \pmod p$, then
				\[
					X(k_v) \to \left(\Br k_v\right)[p]
				\]
				is surjective.
			\end{lemma}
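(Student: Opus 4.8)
\emph{Proof proposal.} The plan is to run the argument Poonen uses for $p=2$ in \cite{Poonen-nonsquares}, evaluating a cyclic algebra of degree $p$ in place of a quaternion algebra; I read the assertion as saying that for any non-constant $\calA\in\Br X$ the map $X(k_v)\to(\Br k_v)[p]$, $x\mapsto\calA(x)$, is onto. First I record what the hypotheses give. Since $K=k(\sqrt[p]{\alpha})$ is cyclic of prime degree $p$ over a number field, we necessarily have $\mu_p\subseteq k$ and $\alpha\notin k^{\times p}$; fix the identification $\Gal(K/k)\cong\Z/p$ attached to $\chi_K$, so that $(\chi_K,b)$ is the cyclic (norm-residue) algebra $(\alpha,b)$. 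Because $\Br X\neq\Br k$, Theorem~\ref{thm:Brauergroup} produces a non-constant class, and I would replace it by a representative of its coset modulo $\Br k$ — this only translates the image of any such evaluation map by an element of $(\Br k_v)[p]$, so it is harmless — to get $\calA=(\chi_K,Q(x))$ with $Q(x)=\prod_iP_i(x)^{n_i}$, $\sum_in_id_i\equiv 0\pmod p$, and, since $(\chi_K,P(x))$ is trivial in $\Br k(X)$ (so that equal exponents would make $\calA$ constant), with the $n_i$ \emph{not} all congruent modulo $p$. As $\calA$ is split by a degree-$p$ extension it is $p$-torsion, so $x\mapsto\calA(x)$ lands in $(\Br k_v)[p]$; for finite $v$ this is cyclic of order $p$, and the goal is to hit all of it.

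The hypothesis $v(\alpha)\not\equiv 0\pmod p$ enters as follows. For $v\notin S$ not lying over $p$, this hypothesis forces $\alpha\notin k_v^{\times p}$, hence (as $\mu_p\subseteq k_v$) $K_v:=K\otimes_kk_v$ is a field, totally and tamely ramified of degree $p$ over $k_v$. Local class field theory then shows that $\inv_v(\chi_K,\cdot)\colon k_v^\times\to\tfrac1p\Z/\Z$ restricts to a \emph{surjection} $\OO_v^\times\twoheadrightarrow\tfrac1p\Z/\Z$ with kernel $\OO_v^{\times p}=\OO_v^\times\cap\N_{K_v/k_v}(K_v^\times)$; in the unramified case $\OO_v^\times$ would sit inside the norm group and this would fail, which is exactly why ramification of $K_v/k_v$ is essential. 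I would take $S$ to consist of the archimedean places, the places above $p$, the places dividing the leading coefficient or the discriminant of $P$, and the places whose residue field has size at most a constant $N=N(p,\deg P)$ fixed in the next paragraph; this $S$ depends only on $P$, and for $v\notin S$ the reduction $\bar P\in\F_v[x]$ is separable of degree $\deg P$. By the displayed surjectivity it now suffices to produce, for each class $j'\in\OO_v^\times/\OO_v^{\times p}$, an $x_0\in\OO_v$ with $\bar P(\bar x_0)\neq 0$ a $p$-th power in $\F_v^\times$ — for then $P(x_0)$ is a $v$-adic unit that is a $p$-th power modulo $v$, hence $P(x_0)=w^p=\N_{K_v/k_v}(w)$ for a unit $w$, so the fibre of $X_0\to\Aff^1$ over $x_0$ has a $k_v$-point in $X(k_v)$ — and with $Q(x_0)\in\OO_v^\times$ in the class $j'$: such a point evaluates under $\calA$ to $\inv_v(\chi_K,Q(x_0))$, the image of $j'$ under the isomorphism $\OO_v^\times/\OO_v^{\times p}\xrightarrow{\sim}\tfrac1p\Z/\Z$ induced by $\inv_v(\chi_K,\cdot)$, so letting $j'$ run over all $p$ classes sweeps out $(\Br k_v)[p]$.

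To find $x_0$ I would count points. The two conditions on $x_0$ say exactly that the affine curve $C_{j'}\colon y^p=\bar P(x),\ z^p=t_{j'}\bar Q(x)$ — with $t_{j'}\in\F_v^\times$ fixed so that $z^p=t_{j'}\bar Q(x)$ forces $\bar Q(x)$ into class $j'$ — has an $\F_v$-point with $yz\neq 0$. The key sub-step is that $C_{j'}$ is geometrically irreducible. Over $\overline{\F}_v$ every constant, in particular the leading coefficient of $P$ and $t_{j'}$, is a $p$-th power, so this reduces to $\bar P$ and $\bar Q$ being independent modulo $p$-th powers in $\overline{\F}_v(x)^\times$; a relation $a\,\divv(\bar P)+b\,\divv(\bar Q)\equiv 0\pmod{p\,\Div(\PP^1)}$ reads $a+bn(s)\equiv 0\pmod p$ at each root $\beta_s$ of $\bar P$ (these are distinct because $\bar P$ is separable), where $n(s)$ is the multiplicity of $\beta_s$ in $\bar Q$, and reads $a\deg P+b\deg Q\equiv 0\pmod p$ at $\infty$, which is automatic since $p\mid\deg P$ and $p\mid\deg Q$; as the $n(s)$ run through the non-constant list $(n_i)$, this forces $a\equiv b\equiv 0\pmod p$. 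Hence $C_{j'}$ is geometrically irreducible of genus $g$ bounded purely in terms of $p$ and $\deg P$; the Weil bound gives $\#C_{j'}(\F_v)\geq\#\F_v+1-2g\sqrt{\#\F_v}$ up to a correction for points at infinity, while the locus $yz=0$ (the points over roots of $\bar P\bar Q$) has size bounded independently of $v$. Taking $N$ large enough in terms of $g$ guarantees, for every $v\notin S$ and every $j'$, a point of $C_{j'}$ with $yz\neq 0$; lifting its $x$-coordinate to $\OO_v$ gives the required $x_0$, finishing the proof.

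I expect the substantive point to be the geometric irreducibility of the auxiliary curves $C_{j'}$ together with the uniformity of the genus bound — this is what lets a single finite set $S$ (equivalently a single $N$) work for all $\alpha$ — and the crucial input there is the translation of ``$\calA\notin\Br k$'' into the statement that the multiplicities $n(s)$ are not all equal. By comparison the local class-field-theory computation and the choice of the finitely many excluded primes are routine.
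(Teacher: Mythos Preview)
Your argument is correct and is essentially the paper's approach: both produce the needed $k_v$-points on $X$ via the auxiliary curve $y^p=P(x)$, $z^p=(\text{unit})\cdot Q(x)$ and its $\F_v$-points, then evaluate $(\chi_K,Q(x))$. The paper simply asserts that this curve has smooth $\F_v$-points for $v\notin S$ and any unit parameter, whereas you supply the missing step---translating $\calA\notin\Br k$ into ``the exponents $n_i$ are not all congruent mod $p$'' and deducing geometric irreducibility, hence the Weil bound---and you also make explicit the local class field theory fact (that $\OO_v^\times\twoheadrightarrow(\Br k_v)[p]$ when $K_v/k_v$ is ramified) which the paper uses implicitly when it takes $\beta$ to be a unit representing an arbitrary $\calB$.
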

			\begin{proof}
				Let $Q(x)$ be such that $\calA := (\chi_{K}, Q(x))$ is a non-trivial element in $\Br X/\Br k$.  (All elements of $\Br X/\Br k$ are of this form by Theorem~\ref{thm:Brauergroup}.)  Then there exists a finite set of places $S$, depending only on $P(x)$ such that for all $v\notin S$, the variety $Y$
				\begin{equation}\label{eqn:ancillaryvariety}
					z^p = P(x), \quad y^p = \beta^{p-1}Q(x)
				\end{equation}
				has a smooth $\F_v$-point (and hence a $k_v$-point), where $\beta$ is any $v$-adic unit.

				Let $S$ be as above, and take $v \notin S$, $\alpha \in k^{\times}$ such that $v(\alpha) \not \equiv 0 \pmod p$.  Take $\calB \in (\Br k_v)[p]$ and choose $\beta \in k_v^{\times}$ such that $\calB = (\chi_K, \beta)$.  Let $(x,y,z)$ be a $k_v$-point of $Y$.  One can check that $Y$ maps to $X$, sending $(x,y,z)$ to $((z,0 \ldots,0),x)$.  The equations of $Y$ imply that $\calA((z,0 \ldots,0),x) = \calB$, completing the proof.
			\end{proof}

			Proceeding as in~\cite{Poonen-nonsquares}*{\S3}, using Lemma~\ref{surjection} to replace~\cite{Poonen-nonsquares}*{Lemma~3.1}, we obtain the following natural generalization of~\cite{Poonen-nonsquares}*{Theorem~1.3}.

			\begin{theorem}\label{thm: finite classes}
				Let $P(x) \in k[x]$ be a nonconstant separable polynomial of degree divisible by $p$. Then there are at most finitely many classes in $k^\times/k^{\times p}$ represented by $\alpha \in k^\times$ such that there is a Brauer-Manin obstruction to the Hasse principle for $X_{\alpha,P(x)}$. \qed
			\end{theorem}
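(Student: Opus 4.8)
The plan is to adapt the argument of~\cite{Poonen-nonsquares}*{\S3}, using Lemma~\ref{surjection} in place of~\cite{Poonen-nonsquares}*{Lemma~3.1}. Write $P = c\prod_{i=1}^{m}P_i$ with the $P_i$ monic irreducible, put $K_\alpha := k(\sqrt[p]{\alpha})$ and $X_\alpha := X_{\alpha,P(x)}$, and let $S$ be the finite set of places furnished by Lemma~\ref{surjection}; enlarge $S$, keeping its dependence on $P$ alone, so that $\OO_{k,S}$ is a principal ideal domain. Call a class $\alpha \in k^\times/k^{\times p}$ \emph{bad} if $X_\alpha$ has a Brauer--Manin obstruction to the Hasse principle. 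If $v(\alpha)\equiv 0 \pmod{p}$ for every $v\notin S$, then $(\alpha)\OO_{k,S}$ is the $p$-th power of a principal ideal, so $\alpha\in\OO_{k,S}^\times\cdot k^{\times p}$; by Dirichlet's unit theorem there are only finitely many such classes. I would collect these, together with the finitely many $\alpha$ for which $K_\alpha$ is the splitting field of some $P_i$, into an exceptional set $E$, and prove that no class outside $E$ is bad.

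So let $\alpha\notin E$. Then some place $v_0\notin S$ has $v_0(\alpha)\not\equiv 0\pmod{p}$, and $K_\alpha$ is not the splitting field of any $P_i$. Suppose, for contradiction, that $X_\alpha$ is bad; then $X_\alpha(\Adeles_k)\neq\emptyset$ and $\Br X_\alpha\neq\Br k$. By Theorem~\ref{thm:Brauergroup} the group $\Br X_\alpha/\Br k$ is finite and generated by the classes of cyclic algebras $(\chi_{K_\alpha},Q_j(x))$, where the $Q_j$ are products of the $P_i$'s that can be chosen independently of $\alpha$. The crucial local input is that the evaluation map
\[
	X_\alpha(k_{v_0})\longrightarrow\Hom\bigl(\Br X_\alpha/\Br k,\ \Q/\Z\bigr)
\]
is surjective. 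This is Lemma~\ref{surjection} at $v_0$, upgraded from a single Brauer class to the whole group: one runs the same proof with the fibre product, over a generating set of $\Br X_\alpha/\Br k$, of the auxiliary varieties~\eqref{eqn:ancillaryvariety}; this fibre product is a geometrically integral curve of genus bounded in terms of $P$ (and $p$), hence has a smooth $\F_{v_0}$-point once $v_0$ avoids a finite set depending only on $P$, and lifting such a point realizes a prescribed element of $\Hom(\Br X_\alpha/\Br k,\Q/\Z)$. Granting this, pick any $(P_v)\in X_\alpha(\Adeles_k)$; since $\inv_v\calB(P_v)=0$ for all but finitely many $v$, the displayed surjectivity lets me replace the $v_0$-component of $(P_v)$ by a suitable point of $X_\alpha(k_{v_0})$ so that $\sum_v\inv_v\calB(P_v)=0$ for every $\calB\in\Br X_\alpha$ --- the contribution of the constant classes $\Br k$ being zero for free by global reciprocity. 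The resulting adelic point lies in $X_\alpha(\Adeles_k)^{\Br}$, contradicting badness. Hence every bad class lies in the finite set $E$, which is the assertion.

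The one non-formal step, and the place I expect difficulty, is the displayed surjectivity. Because $\Br X_\alpha/\Br k$ need not be cyclic, an arbitrary adelic point must be corrected at the \emph{single} place $v_0$, so the form of Lemma~\ref{surjection} that treats one Brauer class at a time is not quite enough; one needs joint surjectivity onto $\Hom(\Br X_\alpha/\Br k,\Q/\Z)$. Proving this comes down to verifying that the fibre product of the varieties~\eqref{eqn:ancillaryvariety} over a generating set is geometrically integral with degree and genus bounded solely in terms of $P$, so that one application of the Lang--Weil estimate yields a finite set $S$ valid uniformly in $\alpha$. Once that is in hand, the remainder --- the two-case split above and the bookkeeping with invariants --- is a direct transcription of~\cite{Poonen-nonsquares}*{\S3}.
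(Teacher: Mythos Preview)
Your proposal is correct and takes essentially the same approach as the paper: the paper's own proof is the single sentence ``Proceeding as in~\cite{Poonen-nonsquares}*{\S3}, using Lemma~\ref{surjection} to replace~\cite{Poonen-nonsquares}*{Lemma~3.1},'' and your write-up is a faithful fleshing-out of exactly that. Your identification of the joint-surjectivity issue is apt---Lemma~\ref{surjection} as stated handles one Brauer class at a time, whereas the argument needs the evaluation map $X_\alpha(k_{v_0})\to\Hom(\Br X_\alpha/\Br k,\Q/\Z)$ to be surjective---and your proposed fix via the fibre product of the auxiliary curves~\eqref{eqn:ancillaryvariety} is the right one: choosing lifts of a basis of $\Br X_\alpha/\Br k$ so that the exponent vectors of the $Q_j$'s together with $(1,\ldots,1)$ are $\F_p$-independent in $(\Z/p\Z)^m$ guarantees geometric integrality of the fibre product, and then Lang--Weil gives a uniform bound depending only on $P$. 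One minor remark: you need not put the classes $\alpha$ with $K_\alpha$ equal to some splitting field of a $P_i$ into $E$, since in that case $\Br X_\alpha/\Br k$ only shrinks (Theorem~\ref{thm:Brauergroup}), and if it vanishes there is no obstruction anyway; but including them is harmless since the set is finite.
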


			\begin{lemma}
				\label{lem:reduction}
				Let $K = k(\zeta_p)$. Assume that $K^\times\setminus K^{\times p}$ is diophantine over $K$. Then $\Am{p}$ is diophantine over $k$.
			\end{lemma}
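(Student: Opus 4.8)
The plan is to combine a simple algebraic reduction with a purely formal descent of the diophantine property along the finite extension $K/k$.

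The algebraic input is that, since $K = k(\zeta_p)$, the degree $d := [K:k]$ divides $p-1$ and is therefore prime to $p$. First I would establish that for $\alpha \in k^\times$ one has $\alpha \in k^{\times p}$ if and only if $\alpha \in K^{\times p}$; equivalently, the natural map $k^\times/k^{\times p} \to K^\times/K^{\times p}$ is injective. Indeed, if $\alpha = \beta^p$ with $\beta \in K^\times$, then applying $\N_{K/k}$ gives $\alpha^d = \N_{K/k}(\alpha) = \N_{K/k}(\beta)^p \in k^{\times p}$; choosing $u,v \in \Z$ with $ud + vp = 1$ (possible as $\gcd(d,p)=1$) yields $\alpha = (\alpha^d)^u(\alpha^v)^p = \bigl(\N_{K/k}(\beta)^u\alpha^v\bigr)^p \in k^{\times p}$. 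Hence $\Am{p} = \{\alpha \in k : \alpha \in K^\times \setminus K^{\times p}\}$; that is, $\Am{p}$ is exactly the set of elements of the subfield $k$ lying in the given diophantine subset $B := K^\times\setminus K^{\times p}$ of $K$.

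The second step is to descend the diophantine property via restriction of scalars. By hypothesis there is a closed subscheme $W \subseteq \Aff^{m+1}_K$ with $B$ equal to the image of $W(K)$ under a coordinate projection $\rho\colon \Aff^{m+1}_K \to \Aff^1_K$. Form $W' := \Res_{K/k}W$, a closed subscheme of $\Res_{K/k}\Aff^{m+1}_K \cong \Aff^{(m+1)d}_k$ satisfying $W'(k) = W(K)$; the projection $\rho$ induces a coordinate projection $\rho'\colon W' \to \Res_{K/k}\Aff^1_K \cong \Aff^d_k$. Fixing the $k$-basis $1 = \omega_1,\omega_2,\dots,\omega_d$ of $K$, the identification $\Res_{K/k}\Aff^1_K \cong \Aff^d_k$ sends $t \in K$ to its coordinate vector, so the linear subspace $\{t_2 = \cdots = t_d = 0\}$ corresponds to $k\cdot 1 \subseteq K$. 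Let $Z \subseteq W'$ be the preimage of this subspace under $\rho'$, a closed subscheme of $\Aff^{(m+1)d}_k$. Then $Z(k) = \{w \in W(K) : \rho(w) \in k\}$, and the coordinate function of $\Aff^{(m+1)d}_k$ picking off the $\omega_1$-component of $\rho'$ carries $Z(k)$ onto $\{\rho(w) : w \in W(K),\ \rho(w)\in k\} = B\cap k = \Am{p}$. (If one insists on the set being the projection of a closed subscheme of some $\Aff^{N+1}_k$ onto a designated coordinate, add a fresh variable $s$ together with the equation setting $s$ equal to that coordinate function — the usual graph trick.) Thus $\Am{p}$ is diophantine over $k$.

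The conceptual heart is the algebraic reduction, and it is exactly there that the hypothesis $K = k(\zeta_p)$ enters — the argument uses nothing about $K$ beyond $\gcd([K:k],p)=1$. The descent step is entirely routine: for any finite extension $L/k$, if $B \subseteq L$ is diophantine over $L$ then $B\cap k$ is diophantine over $k$, since Weil restriction converts the $L$-equations cutting out $W$ into $k$-equations (concretely, expand every coordinate in a $k$-basis of $L$) and demanding that the distinguished coordinate lie in $k$ is a linear condition. I do not anticipate a genuine obstacle here; the only care required is the bookkeeping needed to match the paper's formal definition of a diophantine set, which the graph trick above handles.
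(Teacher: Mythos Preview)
Your proof is correct and follows the same two-step strategy as the paper: establish $K^{\times p}\cap k = k^{\times p}$, then descend the diophantine property along the finite extension $K/k$. The paper's proof is identical in structure but outsources both steps to references (Cohen for the algebraic fact, Shlapentokh for the descent), whereas you supply self-contained arguments via the norm trick and explicit Weil restriction.
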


			\begin{proof}
				By~\cite{Shlapentokh}*{\S2.2}, the set $(K^\times\setminus K^{\times p})\cap k$ is diophantine over $k$. Since $p$ is odd, by~\cite[Theorem 5.6.2]{Cohen} we have $K^{\times p}\cap k = k^{\times p}$; thus $(K^\times \setminus K^{\times p})\cap k = \Am{p}$.
			\end{proof}
			
			We are now in a position to prove Theorem~\ref{thm: pth powers}. For the reader's convenience, we first state Schinzel's hypothesis (c.f.~\cite{CTS-Pencils}*{\S4}).
			
			\medskip
			
			\noindent{\bf Schinzel's hypothesis for number fields:} Let $k$ be a number field, and let $P_i(t) \in k[t]$  be irreducible polynomials for $i = 1, \ldots, n$.  Let $S$ be a finite set of places of $k$ containing all archimedean places, all finite places $v$ where $P_i(t) \not \in \OO_v[t]$ for some $i$, and all finite places above a prime $p$ that is less than or equal to the degree of $\N_{k/\Q}\left(\prod_i P_i(t)\right)$.  

Given elements $\lambda_v \in k_v$ for all $v \in S$, there exists a $\lambda\in k$ such that
\begin{enumerate}
    \item $v(\lambda)\geq 0$ for all $v \notin S$,
    \item $\lambda$ is arbitrarily close to $\lambda_v$ for all finite $v \in S$,
    \item $\lambda$ is arbitrarily large in every archimedean completion.
    \item For all $i$, there is at most one place $v_i \notin S$ such that $P_i(\lambda) \notin k_v^{\times}$, and at $v_i$, $P_i(\lambda)$ is a uniformizing parameter.
\end{enumerate}

			\begin{proof}[Proof of Theorem~\ref{thm: pth powers}]
				By Lemma~\ref{lem:reduction}, we may assume without loss of generality that $\mu_p \subseteq k$. By Case 1(a) of the proof of Proposition~\ref{prop:HPcounterex}, there is a Kummer extension $K := k\left(\sqrt[p]{ab}\right)$ of $k$ and a Ch\^atelet $p$-fold $X_1$ associated to the affine variety
				\[
					\N_{K/k}\left(\vec{z}\right) = 
					\left(x^p + c\right)\left(ax^p + ac + 1\right)
				\]
				that fails to satisfy the Hasse principle on account of a Brauer-Manin obstruction. For $t \in k^\times$, let $X_t$ be the Ch\^atelet $p$-fold associated to the affine variety
				\[
					U_t :\ \N_{k\left(\sqrt[p]{tab}\right)/k}\left(\vec{z}\right) = 
					\left(x^p + c\right)\left(ax^p + ac + 1\right),
				\]				
				and let $f_t\colon U_t \to \Aff^1_k$ be the map $(\vec{z},x) \mapsto x$.
				We claim that the following statements are equivalent:
				\begin{enumerate}
					\item[(i)] $U_t$ has a $k$-point.
					\item[(ii)] $X_t$ has a $k$-point.
					\item[(iii)] $X_t$ is locally soluble and there is no Brauer-Manin obstruction to the Hasse principle for $X_t$.
				\end{enumerate}
				The implications (i) $\implies$ (ii) $\implies$ (iii) are clear.  Finally, if (iii) holds, then by~\cite{CTSSD-Schinzel}*{Theorem~1.1(b) and Examples~1.6} (assuming Schinzel's hypothesis), there is a smooth fiber of $f$ over a $k$-point $P \in \Aff^1_k$ such that $f^{-1}_t(P)(\Adeles_k) \neq \emptyset$. Since $f^{-1}_t(P)$ satisfies the Hasse principle (because $K/k$ is cyclic), we have $f^{-1}_t(P)(k) \neq \emptyset$, so $f^{-1}_t(P)$ is isomorphic to a projective space, and therefore (i) holds.

				The rest of the proof follows almost verbatim that of~\cite{Poonen-nonsquares}*{Theorem~1.1}, replacing~\cite{Poonen-nonsquares}*{Theorem~1.3} with Theorem~\ref{thm: finite classes}.
			\end{proof}

			\begin{proof}[Proof of Corollary~\ref{cor: mth powers}]
				For any integer $m \geq 2$, let $A_m = \Am{m}$.  By Theorem~\ref{thm: pth powers}, for any prime $p$, the set $A_p$ is diophantine over $k$.  We show first that $A_{p^n}$ is diophantine for any rational prime $p$ and $n\geq 1$ by induction on $n$.  The case $n = 1$ is true by hypothesis. Now note that $A_{p^{n+1}} = A_{p} \cup \{\, t^p : t \in A_{p^n} \,\}$, and that the union of finitely many diophantine sets is diophantine. Let $m = p_1^{n_1}\dots p_r^{n_r}$ be the prime factorization of $m$ (over $\Z$). The corollary then follows from the equality
				\[
					A_m = \bigcup_{i = 1}^r A_{p_i^{n_i}}. \tag*{\qed}
				\]
				\hideqed
			\end{proof}

		\subsection{Insufficiency of the \'etale-Brauer obstruction}

			\begin{proof}[Proof of Theorem~\ref{thm:insufficiency}]
				Most of~\cite{Poonen-insufficiency} goes through exactly as stated after replacing~\cite{Poonen-chatelet}*{Prop. 5.1, \S11} with Proposition~\ref{prop:HPcounterex}.  The one exception is \S5 of~\cite{Poonen-insufficiency}.  The motivated reader can determine for themselves how to generalize this argument to Ch\^atelet $p$-folds (the argument does generalize, but perhaps not obviously).  Alternatively, we can use~\cite{CT-0cycles}*{Prop. 2.1} instead, which does  generalize in a straight-forward way.  In \S6 of \cite{Poonen-insufficiency} $s_1$ is replaced with $u^p \widetilde{P}_{\infty}(w,x) + v^p \widetilde{P}_0(w,x)$, or, in characteristic $p$, with $u^p \widetilde{P}_{\infty}(w,x) + u^{p-1}v\widetilde{P}_{\infty}(w,x) +  v^p \widetilde{P}_0(w,x)$
			\end{proof}

			\begin{remark}
				Colliot-Th\'el\`ene proved that, although the $3$-folds Poonen constructed have no rational points, they do have $0$-cycles of degree $1$~\cite{CT-0cycles}.  The same result holds for the $(p+1)$-folds in Theorem~\ref{thm:insufficiency}; it follows from either~\cite{Liang-0cycles}*{Th\'eor\`eme principal} or~\cite{Wittenberg-0cycles}*{Th\'eor\`eme~1.3}. 
			\end{remark}

	\section{Open Problems}\label{sec:Questions}

		Colliot-Th\'el\`ene, Sansuc and Swinnerton-Dyer proved unconditionally that for Ch\^atelet surfaces the Brauer-Manin obstruction to the Hasse principle and weak approximation is the only one~\cite{CTSSDChatelet1, CTSSDChatelet2}.  Colliot-Th\'el\`ene, Skorobogatov, and Swinnerton-Dyer proved the same results for Ch\^atelet $p$-folds (and other varieties), assuming Schinzel's hypothesis~\cite{CTSSD-Schinzel}.  This suggests the following problem.

		\begin{problem}
			Prove \emph{unconditionally} that the Brauer-Manin obstruction is the only obstruction to the Hasse principle and weak approximation for Ch\^atelet $p$-folds.
		\end{problem}

		The difficulty of applying known methods to prove the sufficiency of the Brauer-Manin obstruction (e.g.\ universal torsors and the descent machinery~\cite{CTS-descent2}) seems to grow with $p$.  This suggests that a solution to this problem will require some new insight.

		In a more geometric direction, Colliot-Th\'el\`ene, Coray, and Sansuc proved that certain types of Ch\^atelet surfaces are $k$-unirational if there is a $k$-point~\cite{CTCS} when $k$ is a number field.  In~\cite{CTSSDChatelet2}*{Proposition 8.3}, this result is generalized to arbitrary Ch\^atelet surfaces over fields of characteristic $0$.  (We note that for arbitrary conic bundles, proving $k$-unirationality is quite difficult; for some special cases see~\cites{Mestre1, Mestre2, Mestre3}.)  Additionally, work of Koll\'ar shows that the same statement holds for Ch\^atelet $p$-folds when $k$ is a local field of characteristic $0$~\cite{Kollar-unirationality}*{Corollary 1.8}.  This leads us to the following natural question.
		\begin{problem}
			Let $k$ be a number field.  Does the existence of a $k$-point imply $k$-unirationality for Ch\^atelet $p$-folds, perhaps under some assumptions on the factorization of $P(x)$?
		\end{problem}


	\begin{bibdiv}
		\begin{biblist}

			\bib{Chatelet}{article}{
			   author={Ch{\^a}telet, F.},
			   title={Points rationnels sur certaines courbes et surfaces 
					cubiques},
			   journal={Enseignement Math. (2)},
			   volume={5},
			   date={1959},
			   pages={153--170 (1960)},
			   issn={0013-8584},
			   review={\MR{0130218 (24 \#A85)}},
			}
			
			\bib{Cohen}{book}{
			   author={Cohen, Henri},
			   title={Number theory. Vol. I. Tools and Diophantine equations},
			   series={Graduate Texts in Mathematics},
			   volume={239},
			   publisher={Springer},
			   place={New York},
			   date={2007},
			   pages={xxiv+650},
			   isbn={978-0-387-49922-2},
			   review={\MR{2312337 (2008e:11001)}},
			}

			\bib{CT-Pest}{article}{
			   author={Colliot-Th{\'e}l{\`e}ne, Jean-Louis},
			   title={Points rationnels sur les fibrations},
			   language={French},
			   conference={
			      title={Higher dimensional varieties and rational points},
			      address={Budapest},
			      date={2001},
			   },
			   book={
			      series={Bolyai Soc. Math. Stud.},
			      volume={12},
			      publisher={Springer},
			      place={Berlin},
			   },
			   date={2003},
			   pages={171--221},
			   review={\MR{2011747 (2005a:14027)}},
			}

			\bib{CT-0cycles}{article}{
			   author={Colliot-Th{\'e}l{\`e}ne, Jean-Louis},
			   title={Z\'ero-cycles de degr\'e 1 sur les solides de Poonen},
			   language={French, with English and French summaries},
			   journal={Bull. Soc. Math. France},
			   volume={138},
			   date={2010},
			   number={2},
			   pages={249--257},
			   issn={0037-9484},
			   review={\MR{2679040}},
			}

			\bib{CTCS}{article}{
			   author={Colliot-Th{\'e}l{\`e}ne, Jean-Louis},
			   author={Coray, Daniel},
			   author={Sansuc, Jean-Jacques},
			   title={Descente et principe de Hasse pour certaines vari\'et\'es
			   rationnelles},
			   language={French},
			   journal={J. Reine Angew. Math.},
			   volume={320},
			   date={1980},
			   pages={150--191},
			   issn={0075-4102},
			   review={\MR{592151 (82f:14020)}},
			}

			\bib{CTHS}{article}{
			   author={Colliot-Th{\'e}l{\`e}ne, Jean-Louis},
			   author={Harari, David},
			   author={Skorobogatov, Alexei},
			   title={Valeurs d'un polyn\^ome \`a une variable repr\'esent\'ees 
						par une norme},
			   language={French, with English summary},
			   conference={
			      title={Number theory and algebraic geometry},
			   },
			   book={
			      series={London Math. Soc. Lecture Note Ser.},
			      volume={303},
			      publisher={Cambridge Univ. Press},
			      place={Cambridge},
			   },
			   date={2003},
			   pages={69--89},
			   review={\MR{2053456 (2005d:11095)}},
			}

			\bib{CTS-Requivalence}{article}{
			   author={Colliot-Th{\'e}l{\`e}ne, Jean-Louis},
			   author={Sansuc, Jean-Jacques},
			   title={La $R$-\'equivalence sur les tores},
			   language={French},
			   journal={Ann. Sci. \'Ecole Norm. Sup. (4)},
			   volume={10},
			   date={1977},
			   number={2},
			   pages={175--229},
			   issn={0012-9593},
			   review={\MR{0450280 (56 \#8576)}},
			}

			\bib{CTS-descent}{article}{
			   author={Colliot-Th{\'e}l{\`e}ne, Jean-Louis},
			   author={Sansuc, Jean-Jacques},
			   title={La descente sur les vari\'et\'es rationnelles},
			   language={French},
			   conference={
			      title={Journ\'ees de G\'eom\'etrie Alg\'ebrique d'Angers, 
					Juillet 1979/Algebraic Geometry, Angers, 1979},
			   },
			   book={
			      publisher={Sijthoff \& Noordhoff},
			      place={Alphen aan den Rijn},
			   },
			   date={1980},
			   pages={223--237},
			   review={\MR{605344 (82d:14016)}},
			}
			
			\bib{CTS-Schinzel}{article}{
			   author={Colliot-Th{\'e}l{\`e}ne, Jean-Louis},
			   author={Sansuc, Jean-Jacques},
			   title={Sur le principe de Hasse et l'approximation faible, et sur 
					une hypoth\`ese de Schinzel},
			   language={French},
			   journal={Acta Arith.},
			   volume={41},
			   date={1982},
			   number={1},
			   pages={33--53},
			   issn={0065-1036},
			   review={\MR{667708 (83j:10019)}},
			}
			
			\bib{CTS-descent2}{article}{
			   author={Colliot-Th{\'e}l{\`e}ne, Jean-Louis},
			   author={Sansuc, Jean-Jacques},
			   title={La descente sur les vari\'et\'es rationnelles. II},
			   language={French},
			   journal={Duke Math. J.},
			   volume={54},
			   date={1987},
			   number={2},
			   pages={375--492},
			   issn={0012-7094},
			   review={\MR{899402 (89f:11082)}},
			   doi={10.1215/S0012-7094-87-05420-2},
			}

			\bib{CTSSDChatelet1}{article}{
			   author={Colliot-Th{\'e}l{\`e}ne, Jean-Louis},
			   author={Sansuc, Jean-Jacques},
			   author={Swinnerton-Dyer, Peter},
			   title={Intersections of two quadrics and Ch\^atelet surfaces. I},
			   journal={J. Reine Angew. Math.},
			   volume={373},
			   date={1987},
			   pages={37--107},
			   issn={0075-4102},
			   review={\MR{870307 (88m:11045a)}},
			}

			\bib{CTSSDChatelet2}{article}{
			   author={Colliot-Th{\'e}l{\`e}ne, Jean-Louis},
			   author={Sansuc, Jean-Jacques},
			   author={Swinnerton-Dyer, Peter},
			   title={Intersections of two quadrics and Ch\^atelet surfaces.
						II},
			   journal={J. Reine Angew. Math.},
			   volume={374},
			   date={1987},
			   pages={72--168},
			   issn={0075-4102},
			   review={\MR{876222 (88m:11045b)}},
			}

			\bib{CTSSD-Schinzel}{article}{
			   author={Colliot-Th{\'e}l{\`e}ne, Jean-Louis},
			   author={Skorobogatov, Alexei},
			   author={Swinnerton-Dyer, Peter},
			   title={Rational points and zero-cycles on fibred varieties: 
						Schinzel's hypothesis and Salberger's device},
			   journal={J. Reine Angew. Math.},
			   volume={495},
			   date={1998},
			   pages={1--28},
			   issn={0075-4102},
			   review={\MR{1603908 (99i:14027)}},
			   doi={10.1515/crll.1998.019},
			}
			
			\bib{CTS-Pencils}{article}{
			   author={Colliot-Th{\'e}l{\`e}ne, Jean-Louis},
			   author={Swinnerton-Dyer, Peter},
			   title={Hasse principle and weak approximation for pencils of
			   Severi-Brauer and similar varieties},
			   journal={J. Reine Angew. Math.},
			   volume={453},
			   date={1994},
			   pages={49--112},
			   issn={0075-4102},
			   review={\MR{1285781 (95h:11060)}},
			   doi={10.1515/crll.1994.453.49},
			}

			\bib{Corn}{article}{
			   author={Corn, Patrick},
			   title={The Brauer-Manin obstruction on del Pezzo surfaces of 
					degree 2},
			   journal={Proc. Lond. Math. Soc. (3)},
			   volume={95},
			   date={2007},
			   number={3},
			   pages={735--777},
			   issn={0024-6115},
			   review={\MR{2368282 (2009a:14027)}},
			   doi={10.1112/plms/pdm015},
			}

			\bib{deJong}{article}{
			    AUTHOR = {de Jong, A. J.}
				title={A result of Gabber.}
				note={Preprint http://www.math.columbia.edu/\~{}dejong/ papers/2-gabber.pdf}, 
			}

			\bib{Fujiwara}{article}{
			   author={Fujiwara, Kazuhiro},
			   title={A proof of the absolute purity conjecture (after Gabber)},
			   conference={
			      title={Algebraic geometry 2000, Azumino (Hotaka)},
			   },
			   book={
			      series={Adv. Stud. Pure Math.},
			      volume={36},
			      publisher={Math. Soc. Japan},
			      place={Tokyo},
			   },
			   date={2002},
			   pages={153--183},
			   review={\MR{1971516 (2004d:14015)}},
			}

			\bib{HBS}{article}{
			   author={Heath-Brown, Roger},
			   author={Skorobogatov, Alexei},
			   title={Rational solutions of certain equations involving norms},
			   journal={Acta Math.},
			   volume={189},
			   date={2002},
			   number={2},
			   pages={161--177},
			   issn={0001-5962},
			   review={\MR{1961196 (2004b:14033)}},
			   doi={10.1007/BF02392841},
			}

			\bib{Koenigsmann}{misc}{
				author = {Koenigsmann, Jochen},
				title = {Defining $\Z$ in $\Q$},
				note = {Preprint, \texttt{arXiv:1011.3424}}
			}

			\bib{Kollar-unirationality}{article}{
			   author={Koll{\'a}r, J{\'a}nos},
			   title={Rationally connected varieties over local fields},
			   journal={Ann. of Math. (2)},
			   volume={150},
			   date={1999},
			   number={1},
			   pages={357--367},
			   issn={0003-486X},
			   review={\MR{1715330 (2000h:14019)}},
			   doi={10.2307/121107},
			}

			\bib{Liang-0cycles}{article}{
				author={Liang, Yongqi},
				title={Principe local-global pour les z\'ero-cycles sur 
					certaines fibrations au-dessus d'une courbe : I},
				note={Preprint, \texttt{arXiv:1006.2572v3}}
			}
			
			\bib{Mestre1}{article}{
			   author={Mestre, Jean-Fran{\c{c}}ois},
			   title={Annulation, par changement de variable, d'\'el\'ements de 
				${\rm Br}_2(k(x))$ ayant huit p\^oles, \`a\ r\'esidu constant},
			   language={French, with English and French summaries},
			   journal={C. R. Acad. Sci. Paris S\'er. I Math.},
			   volume={319},
			   date={1994},
			   number={11},
			   pages={1147--1149},
			   issn={0764-4442},
			   review={\MR{1309090 (95j:12002)}},
			}
			
			\bib{Mestre2}{article}{
			   author={Mestre, Jean-Fran{\c{c}}ois},
			   title={Annulation, par changement de variable, d'\'el\'ements de 
					${\rm Br}_2(k(x))$ ayant quatre p\^oles},
			   language={French, with English and French summaries},
			   journal={C. R. Acad. Sci. Paris S\'er. I Math.},
			   volume={319},
			   date={1994},
			   number={6},
			   pages={529--532},
			   issn={0764-4442},
			   review={\MR{1298276 (95g:12004)}},
			}

			\bib{Mestre3}{article}{
			   author={Mestre, Jean-Fran{\c{c}}ois},
			   title={Annulation, par changement de variable, d'\'el\'ements de 
					${\rm Br}_2(k(x))$ ayant cinq p\^oles},
			   language={French, with English and French summaries},
			   journal={C. R. Acad. Sci. Paris S\'er. I Math.},
			   volume={322},
			   date={1996},
			   number={6},
			   pages={503--505},
			   issn={0764-4442},
			   review={\MR{1383424 (97e:13001)}},
			}
			
			\bib{Milne-EtaleCohomology}{book}{
			   author={Milne, James S.},
			   title={\'Etale cohomology},
			   series={Princeton Mathematical Series},
			   volume={33},
			   publisher={Princeton University Press},
			   place={Princeton, N.J.},
			   date={1980},
			   pages={xiii+323},
			   isbn={0-691-08238-3},
			   review={\MR{559531 (81j:14002)}},
			}

			\bib{Poonen-chatelet}{article}{
			   author={Poonen, Bjorn},
			   title={Existence of rational points on smooth projective 
						varieties},
			   journal={J. Eur. Math. Soc. (JEMS)},
			   volume={11},
			   date={2009},
			   number={3},
			   pages={529--543},
			   issn={1435-9855},
			   review={\MR{2505440 (2010c:14019)}},
			   doi={10.4171/JEMS/159},
			}

			\bib{Poonen-nonsquares}{article}{
			   author={Poonen, Bjorn},
			   title={The set of nonsquares in a number field is Diophantine},
			   journal={Math. Res. Lett.},
			   volume={16},
			   date={2009},
			   number={1},
			   pages={165--170},
			   issn={1073-2780},
			   review={\MR{2480570 (2010c:14025)}},
			}

			\bib{Poonen-insufficiency}{article}{
				author={Poonen, Bjorn},
				title={Insufficiency of the Brauer-Manin obstruction applied 
					to \'etale covers},
				journal={Annals of Math.}
				volume={171}
				date={2010}
				number={3},
				pages={2157–-2169}
			}
			
			\bib{Schinzel}{article}{
			   author={Schinzel, A.},
			   author={Sierpi{\'n}ski, W.},
			   title={Sur certaines hypoth\`eses concernant les nombres 
					premiers},
			   language={French},
			   journal={Acta Arith. 4 (1958), 185--208; erratum},
			   volume={5},
			   date={1958},
			   pages={259},
			   issn={0065-1036},
			   review={\MR{0106202 (21 \#4936)}},
			}

			\bib{Serre}{article}{
			   author={Serre, Jean-Pierre},
			   title={Alg\`ebre et g\'eom\'etrie},
			   language={French},
			   journal={Ann. Coll\`ege France},
			   volume={92},
			   date={1991/92},
			   pages={105--114 (1993)},
			   issn={0069-5580},
			   review={\MR{1325737}},
			}

			\bib{Shlapentokh}{article}{
			   author={Shlapentokh, Alexandra},
			   title={Hilbert's tenth problem over number fields, a survey},
			   conference={
			      title={ geometry},
			      address={Ghent},
			      date={1999},
			   },
			   book={
			      series={Contemp. Math.},
			      volume={270},
			      publisher={Amer. Math. Soc.},
			      place={Providence, RI},
			   },
			   date={2000},
			   pages={107--137},
			   review={\MR{1802010 (2001m:03023)}},
			}

			\bib{Skorobogatov-torsors}{book}{
			   author={Skorobogatov, Alexei N.},
			   title={Torsors and rational points},
			   series={Cambridge Tracts in Mathematics},
			   volume={144},
			   publisher={Cambridge University Press},
			   place={Cambridge},
			   date={2001},
			   pages={viii+187},
			   isbn={0-521-80237-7},
			   review={\MR{1845760 (2002d:14032)}},
			}

			\bib{Viray-char2chatelet}{article}{
				author={Viray, Bianca},
				title={Failure of the Hasse principle for Ch\^atelet surfaces 
					in characteristic 2},
				date={2011},
				note={to appear in \emph{J.\ Th\'eor.\ Nombres Bordeaux}, \texttt{arXiv:0902.3644}}
			}

			\bib{Wittenberg-0cycles}{article}{
				author={Wittenberg, Olivier},
				title={Z\'ero-cycles sur les fibrations au-dessus d'une courbe 
						de genre quelconque},
				note={Preprint, \texttt{arXiv:1010.1883}}
			}
			
		\end{biblist}
	\end{bibdiv}	
		
\end{document}